\newcommand{\R}{\mathbb{R}}
\newcommand{\C}{\mathbb{C}}
\newcommand{\dis}{\displaystyle}
\newtheorem{theorem}{Theorem}
\newtheorem{lemma}{Lemma}
\newtheorem{definition}{Definition}
\newtheorem{remark}{Remark}
\journal{??????????????????????}
\begin{document}

\begin{frontmatter}


\title{The Modified Camassa-Holm Equation: Wave breaking, Classification of Traveling Waves and Explicit Elliptic Peakons.}



\author{ALISSON DAR\'OS}
\address{Department of Mathematics, Federal University of Pampa,

97650-000 Itaqui, Brazil.

alissondaros@unipampa.edu.br}

\author{LYNNYNGS KELLY ARRUDA SARAIVA DE PAIVA}
\address{Department of Mathematics, Federal University of S\~ao Carlos,

PO B 676, 13565-905 S\~ao Carlos, Brazil.

lynnyngs@dm.ufscar.br}

{
\fontsize{10pt}{\baselineskip}\selectfont
\begin{abstract}
We show that wave breaking occurs for the modified Camassa-Holm (mCH) equation. Next we classify all traveling wave solutions of the modified Camassa-Holm equation in the weak sense via parametrization of their maxima, minima and wave velocity constants. This equation is shown to admit in addition to more popular solutions like smooth traveling waves and peakons, some not so well-known traveling waves as, for example, kinks, cuspons, composite waves and stumpons. Moreover, explicit peakons in terms of Jacobian elliptic functions are found.
\vspace{0.2cm}

\noindent\noindent\textit{Keywords: Wave breaking, peakons and global weak solutions.}

\end{abstract}
}



\end{frontmatter}



\section{Introduction}
The original Camassa-Holm equation was introduced by Fuchssteiner and Fokas \cite{Fokas} through the method of recursion operators in 1981 and derived from physical principles by Camassa and Holm in 1999 \cite{Camassa}.

Our purpose is to investigate the modified Camassa-Holm equation (mCH)
			\begin{eqnarray}
				\label{mCH}
				u_{t}-u_{xxt} = uu_{xxx}+2u_{x}u_{xx}-3u^{2}u_{x},\ \ x\in\R,\ t>0\\
				u(x,0)=u_0(x),\label{dado}
				\end{eqnarray}
obtained from modified Dullin-Gottwald-Holm (mDGH) equation \cite{Yin}
					$$\tilde{u}_{t} + \kappa\tilde{u}_{x}   - \tilde{u}_{xxt} - \kappa \tilde{u}_{xxx} = \tilde{u}\tilde{u}_{xxx} + 2\tilde{u}_{x}\tilde{u}_{xx} - 3\tilde{u}^{2}\tilde{u}_{x},\ \ x\in\R,\ t>0,$$
by transformation $u(x,t)\mapsto \tilde{u}(x+\kappa t, t)$. The original DGH equation was obtained by Dullin, Gottwald and Holm \cite{Dullin} for a unidirectional water wave with fluid velocity $u(x,t)$, where the constant $\kappa\neq 0$ is the linear wave speed for undisturbed water at rest at spatial infinity. 

		The mCH equation can also be obtained from the $ab$-family of equations \cite{H1,Hakkaev,H}
				\begin{equation}
				\label{eq qntds conservadas}
				u_{t} + (a(u))_{x} - u_{xxt} = \left(b^{'}(u)\frac{u_{x}^{2}}{2} + b(u)u_{xx}\right)_{x}
				\end{equation}
where $a,b:\mathbb{R}\longrightarrow\mathbb{R}$ are smooth function and $a(0)=0$, by considering $a(u)=u^{3}$ and $b(u)=u$. From \cite{H1,H} it is also known that (\ref{mCH}) has three natural invariants:
				\begin{equation}
				\label{qntds conservadas}
				E(u)=-\int_{\mathbb R}\left[\frac{u^{4}}{8}+\frac{uu_{x}^{2}}{2}\right]\ dx, \ \ F(u)=\frac{1}{2}\int_{\mathbb R}[u^{2} +u_{x}^{2}]\ dx\ \ \ \text{and}\ \ \ V(u)= \int_{\mathbb R}u\ dx.
				\end{equation}


			Equation (\ref{mCH}) has been investigated in recent years. Tian and Song in \cite{Tian} obtained peakons composed of hyperbolic functions. In \cite{Wazwaz,Wazwaz2}, Wazwaz employed this modified form with $k=0$ as a vehicle to explore the change in the physical structure of the solution from peakons to bell-shaped solitary wave solutions and showed that mCH equation has a $sec\ h$ like solitary wave solution. In \cite{Yousif}, a variational homotopy perturbation method (VHPM) has been studied to obtain solitary wave solutions of the mCH equation. More recently in \cite{Daros}, we guarantee results about the orbital instability for a specific class of periodic traveling wave solutions with the mean zero property and large spatial period.

In this paper, we prove steepening at inflection points, i.e., we consider an initial condition in $H^3(\mathbb R)$ that has an inflection point to the right of its maximum and get  that the time dependent slope at the inflection point becomes vertical in finite time. This property means that wave breaking holds. Next we classify all traveling wave solutions, $u(x, t) = \phi(x -ct)$, $c \in \R$, for mCH equation $(\ref{mCH})$ using a weak formulation in $H^{1}_{loc}(\R)$ and obtain explicit formulas for peakons in terms of Jacobian elliptic functions. As for the mDP equation \cite{Ding}, we also obtain some very interesting types of solutions such as kinks, cuspons, composite waves and stumpons, in addition to the more familiar smooth waves and peakons. To get an idea: the composite waves are obtained by combining cuspons and peakons into new traveling waves and the waves called stumpons are obtained by inserting intervals where $\phi$ equals a constant at the crests of suitable cusped waves. 




In the course of this work, we denote $\star$ the convolution on $\mathbb R$. We also use $(\cdot, \cdot)$ to represent the standard inner product in $L^2_{per}(\mathbb R)$. For $1\leq p \leq \infty$, the norm in the $L^p_{per}(\mathbb R)$ will be denoted by $||\cdot ||_{L^p}$, while $||\cdot||_{s}$ will stand for the norm in the classical Sobolev spaces $H^s_{per}(\mathbb R)$ for $s\geq 0$.

\section{Preliminaries}\label{preliminaries}

Formally, problem (\ref{mCH})-(\ref{dado}) is equivalent to the hyperbolic-elliptic system 
\begin{eqnarray}
u_t+\partial_x\left(\frac{u^2}{2}\right)+P_x =0,\ \  (x,t)\in \R \times \mathbb R_+, \label{H}\\
P-P_{xx}=u^3+\frac{u^2}{2}+\frac{{u_x}^2}{2}, \ \ (x,t)\in \R\times \mathbb R_+,\label{E}\\
u(x,0)=u_0(x),\ \ x\in \mathbb R\label{idata}
\end{eqnarray}

The operator $(1-\partial_{xx}^2)^{-1}$ has a convolution structure: 
 \begin{equation}
 (1-\partial_{xx}^2)^{-1}(f)(x)=(G\star f)(x),
 \end{equation}
 where  $G(x)$ is the Green function
 \begin{eqnarray*}
 G(x)=\frac{e^{-|x|}}{2}, \ \ x\in \mathbb R.
 \end{eqnarray*}
 Hence we have 
\begin{eqnarray}\label{P}
P(x,t)=G\star \left(u^3+\frac{u^2}{2}+\frac{u_x^2}{2}\right)(x,t),  
\end{eqnarray} 
and (\ref{H})-(\ref{idata}) can be rewritten as a conservation law with a non-local flux function $F$: 
\begin{eqnarray}
u_t+F(u)_x =0,\ \  (x,t)\in \mathbb R\times \mathbb R_+, \label{CL}\\
u(x,0)=u_0(x),\ \ x\in \mathbb R\label{idata1},
\end{eqnarray}		
where $F(u)=\frac{u^2}{2}+G\star (u^3+\frac{u^2}{2}+\frac{u_x^2}{2})$.

\begin{definition}\label{ws} Let $u_0\in H^1(\mathbb R)$ be given. A function $u:[0,T]\times \mathbb{R} \rightarrow \mathbb{R} $ is called a weak solution to (\ref{mCH}), if $u\in L^{loc}_{\infty}([0,T]; H^1)$ satisfies the identity
$$ \int_0^T \int_{\mathbb R} (u\psi_t+F(u)\psi_x)dxdt+\int_{\mathbb R}u_0(x)\psi_0(x)dx=0$$
for all $\psi\in C^{\infty}_0([0,T]\times \mathbb{R})$ that are restrictions to $[0,T)\times \mathbb{R}$
of a continuously differentiable function on $\mathbb{R}^2$ with compact support contained in $(-T,T)\times \mathbb{R}.$
\end{definition}
		
\section{Steepening lemma}
The local well posedness of the Cauchy problem of Equation (\ref{mCH}) with initial data $u_0\in H^s(\mathbb R)$,  $s>\frac{3}{2}$ can be obtained by applying Kato’s semigroup theory \cite{Kato}. More precisely, we have the following well-posedness result. 

\begin{theorem}
\label{teorema local}
Given $u_{0}\in H^{s}(\mathbb R)$, $s>\frac{3}{2}$, there exists a maximal $t_0> 0$ and a unique solution $u(x,t)$ to problem (\ref{mCH})-(\ref{dado}) such that
				$$u\in C([0,t_{0}), H^{s}(\mathbb R))\cap  C^{1}([0,t_{0}), H^{s-1}(\mathbb R)).$$
Moreover, the solution depends continuously on the initial data. For $u_{0}\in H^{3}(\mathbb R)$ the solution posseses the aditional regularity 
$$u\in C([0,t_{0}), H^{3}(\mathbb R))\cap  C^{1}([0,t_{0}), H^{2}(\mathbb R)). $$
  
\end{theorem}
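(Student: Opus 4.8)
The plan is to cast (\ref{mCH}) as an abstract quasilinear evolution equation and invoke Kato's theorem on quasilinear equations of evolution. Using the nonlocal reformulation of Section \ref{preliminaries}, the equation reads $u_t + A(u)u = f(u)$, where $A(u) = u\,\partial_x$ and $f(u) = -\partial_x(1-\partial_x^2)^{-1}\big(u^3 + \tfrac{u^2}{2} + \tfrac{u_x^2}{2}\big)$. I take $Y = H^s(\mathbb{R})$, $X = H^{s-1}(\mathbb{R})$, and the isomorphism $Q = \Lambda = (1-\partial_x^2)^{1/2}\colon Y \to X$. Kato's abstract result then yields a unique solution $u \in C([0,t_0),Y)\cap C^1([0,t_0),X)$ depending continuously on $u_0$, provided the quadruple $(A,f,X,Y)$ satisfies Kato's structural hypotheses, so the whole task reduces to checking these hypotheses.

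First I would verify that $-A(y) = -y\,\partial_x$ generates a $C_0$-semigroup of type $(1,\beta)$ on $X$, i.e. that $A(y)$ is quasi-m-accretive, uniformly for $y$ in bounded balls of $Y$. Since $s>\tfrac32$ gives the embedding $H^s \hookrightarrow C^1$, the coefficients $y$ and $y_x$ are bounded, and quasi-accretiveness follows from an integration-by-parts estimate for $\langle A(y)w,w\rangle_{H^{s-1}}$ (the $L^2$ model case being $\int y\,w_x\,w = -\tfrac12\int y_x\,w^2 \ge -\tfrac12\|y_x\|_{L^\infty}\|w\|_{L^2}^2$), while the range condition is obtained by a standard elliptic regularization. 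Next I would check the Lipschitz bound $\|(A(y)-A(z))w\|_X \le \mu\,\|y-z\|_X\,\|w\|_Y$, which reduces to estimating $\|(y-z)w_x\|_{H^{s-1}}$ and follows from the algebra property of $H^{s-1}$ (valid since $s-1>\tfrac12$).

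The decisive step is Kato's intertwining condition: $B(y) := \Lambda A(y)\Lambda^{-1} - A(y)$ must extend to a bounded operator on $X$, with $\|B(y)\|_{L(X)}$ uniformly bounded and $B$ Lipschitz in $y$. Because $\Lambda$ and $\partial_x$ are both Fourier multipliers they commute, so $B(y) = [\Lambda, y]\,\partial_x\Lambda^{-1}$; since $\partial_x\Lambda^{-1}$ is bounded of order zero on every $H^r$, the problem reduces to controlling the commutator $[\Lambda, y]$ on $H^{s-1}$. This is precisely a Kato–Ponce type commutator estimate, $\|[\Lambda,y]\|_{L(H^{s-1})}\lesssim \|y\|_{H^s}$, and I expect it to be the main obstacle: one must match derivative counts so that no derivative is lost, which is exactly where the threshold $s>\tfrac32$ (forcing $y_x\in L^\infty$) is used. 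The Lipschitz dependence of $B$ follows from the bilinearity of the commutator in $y$.

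Finally, I would verify that $f\colon Y \to Y$ is bounded and Lipschitz on bounded sets. As $H^s$ is a Banach algebra for $s>\tfrac12$, the terms $u^3,u^2 \in H^s$ and $u_x^2 \in H^{s-1}$, so the argument of the convolution lies in $H^{s-1}$; since $\partial_x(1-\partial_x^2)^{-1}$ is smoothing of order one, mapping $H^{s-1}\to H^s$, we get $f(u)\in Y$ with the required bounds, and the Lipschitz estimates on bounded sets come from the multilinear structure of the nonlinearity. With all hypotheses in place, Kato's theorem delivers the stated local well-posedness and continuous dependence. The additional $H^3$ statement follows by running the same argument with $s=3$; that the maximal time $t_0$ is unchanged is a consequence of a blow-up criterion showing that the solution persists as long as $\|u_x(t)\|_{L^\infty}$ remains finite, a quantity controlled independently of the Sobolev index.
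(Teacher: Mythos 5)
Your proposal is correct and takes essentially the same route as the paper: the paper's entire proof of Theorem \ref{teorema local} is a citation to Hakkaev--Kirchev \cite{H}, whose argument is precisely an application of Kato's quasilinear semigroup theory to the nonlocal form $u_t + uu_x = -\partial_x(1-\partial_x^2)^{-1}\left(u^3 + \tfrac{u^2}{2} + \tfrac{u_x^2}{2}\right)$ with $Y=H^s$, $X=H^{s-1}$, $Q=\Lambda$, and the Kato--Ponce commutator verification you describe. Your sketch simply supplies the structural details that the paper delegates to the reference.
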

\begin{proof}
See Hakkaev, Iliev and Kirchev in \cite{H}.
\end{proof}
\begin{remark} The solutions obtained in Theorem \ref{teorema local} are called strong solutions to Equation (\ref{mCH}).
\end{remark}

Assume now that $u_0\in H^3(\mathbb{R})$ and let $u\in C([0,t_0);H^3(\mathbb{R}) )\cap C^1([0,t_0);H^2(\mathbb{R}) )$ be the corresponding strong solution of (\ref{mCH})-(\ref{dado}). 

\begin{remark}
If $u_0\in H^3(\mathbb{R})$, then the $|| u(\cdot,t)||_1$ norm is conserved in time as long as the solution exists, which implies by the Sobolev embedding $H^1(\mathbb R)\subset L_{\infty}(\mathbb R)$ that  
\end{remark}
\begin{equation}\label{tbound}
M:=\sup_{t\in [0,\infty)} ||u(\cdot,t) ||_{L_{\infty}} <\infty.
\end{equation}




The operator $(1-\partial_{xx}^2)^{-1}$ can be represented as a convolution operator:
$$((1-\partial_{xx}^2)^{-1} )f(x)=\int_{\mathbb R} G(x-y)f(y)dy,\ \ f\in L^2(\mathbb R),$$ where 
$$  G(x)=\frac{e^{-|x|}}{2},\ \ x\in\R.$$
From equation (\ref{CL}) we get
\begin{eqnarray}\label{advective}
u_t+uu_x&=&-\partial_x\left(G\star \left(u^3+\frac{u^2}{2}+\frac{u_x^2}{2}\right)\right)\nonumber\\
&=&-\partial_x\int_{\mathbb R}\frac{1}{2}\exp(-|x-y|)\left(u^3(y,t)+\frac{u^2}{2}(y,t)+\frac{u_y^2}{2}(y,t)  \right)dy
\end{eqnarray}
 in $C([0,T);H^1(\mathbb R) )$.

By differentiation with respect to $x$ we get 
\begin{eqnarray*}
u_{tx}+u_x^2+uu_{xx}&=&-\partial_x^2\left(G\star \left(u^3+\frac{u^2}{2}+\frac{u_x^2}{2}\right)\right)\\
&=& (Q^2-Id)\left(G\star \left(u^3+\frac{u^2}{2}+\frac{u_x^2}{2}\right)\right)\\
&=& \left(u^3+\frac{u^2}{2}+\frac{u_x^2}{2}\right)-\left(G\star \left(u^3+\frac{u^2}{2}+\frac{u_x^2}{2}\right)\right),
\end{eqnarray*}
and therefore 
\begin{equation}\label{eb}
 u_{tx}+uu_{xx}=u^3+\frac{u^2}{2}-\frac{u_x^2}{2}-\left(G\star \left(u^3+\frac{u^2}{2}+\frac{u_x^2}{2}\right)\right)
\end{equation}
in the space $C([0,T);L^2(\mathbb R) ).$


We prove now the following blow-up result for (\ref{mCH})-(\ref{dado}):

\begin{lemma}(Steepening Lemma \cite{Camassa,Crisan}) Suppose the initial profile of velocity $u_0\in H^{3}(\mathbb R)$, has an inflection point at $x=\overline{x}$ to the right of its maximum. Moreover we assume that $u_x(\overline x,0)<-\sqrt{2(M^2+2M^3)}$ , where $M $ is the constant defined in (\ref{tbound}).Then, the negative slope at the inflection point will become vertical in finite time.
\end{lemma}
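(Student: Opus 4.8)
The plan is to use the classical Camassa--Holm method of characteristics to reduce the statement to a scalar Riccati-type differential inequality for the slope along a particle path. First I would define the characteristic $q(t)$ issuing from the inflection point by the ODE $\dot q(t)=u(q(t),t)$, $q(0)=\overline x$, which is well defined and $C^1$ on the maximal existence interval $[0,t_0)$ since $u(\cdot,t)\in H^3\hookrightarrow C^2$. Setting $m(t)=u_x(q(t),t)$, differentiation along the flow together with identity (\ref{eb}) gives
\begin{equation*}
\frac{dm}{dt}=u_{tx}(q,t)+u(q,t)\,u_{xx}(q,t)=u^3+\frac{u^2}{2}-\frac{m^2}{2}-\Big(G\star\big(u^3+\tfrac{u^2}{2}+\tfrac{u_x^2}{2}\big)\Big)(q,t),
\end{equation*}
where every term on the right is evaluated at $(q(t),t)$ and I have used $u_x(q,t)=m(t)$.

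The second step is to bound the source terms uniformly in time using the a priori bound $\|u(\cdot,t)\|_{L^\infty}\le M$ from (\ref{tbound}). On the one hand $u^3+\frac{u^2}{2}\le M^3+M^2$. On the other hand, since $\frac{u^2}{2}+\frac{u_x^2}{2}\ge 0$ and $u^3\ge -M^3$, the integrand $u^3+\frac{u^2}{2}+\frac{u_x^2}{2}$ is bounded below by $-M^3$ pointwise; because $G>0$ and $\int_{\mathbb R}G=1$, this yields $G\star(\cdots)\ge -M^3$, hence $-G\star(\cdots)\le M^3$. Combining these estimates produces the Riccati inequality
\begin{equation*}
\frac{dm}{dt}\le -\frac{1}{2}m^2+\big(M^2+2M^3\big)=-\frac{1}{2}\Big(m^2-2(M^2+2M^3)\Big).
\end{equation*}

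The final step is a comparison argument. Writing $\alpha=\sqrt{2(M^2+2M^3)}$, the hypothesis reads $m(0)<-\alpha$. As long as $m(t)\le -\alpha$ the right-hand side above is nonpositive, so $m$ is nonincreasing and can never cross $-\alpha$; thus $m(t)\le m(0)<-\alpha$ persists on $[0,t_0)$. Comparing $m$ with the solution of the autonomous equation $\dot y=\tfrac12(\alpha^2-y^2)$, $y(0)=m(0)$, which decreases to $-\infty$ in the finite time $T^*=\tfrac1\alpha\ln\frac{m(0)-\alpha}{m(0)+\alpha}$, I conclude $m(t)\to-\infty$ no later than $T^*$. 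Since $m(t)=u_x(q(t),t)$ and $H^3\hookrightarrow C^1$ would force $u_x$ to remain bounded on any interval of existence, the maximal time $t_0$ must be finite and the slope at the tracked point---the particle path issuing from the inflection point---becomes vertical, i.e.\ wave breaking occurs.

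The hard part will be the control of the nonlocal convolution term $G\star(u^3+\frac{u^2}{2}+\frac{u_x^2}{2})$: unlike in the classical Camassa--Holm case the integrand contains the sign-indefinite cubic $u^3$, so producing a lower bound for the convolution is exactly what makes the Riccati inequality close, and it is this bound that dictates the constant $M^2+2M^3$ appearing in the hypothesis. The role of the inflection point to the right of the maximum is to guarantee that $m(0)=u_x(\overline x,0)$ is negative with the required magnitude; the characteristic then transports this steep negative slope, and it is the transported slope that blows up.
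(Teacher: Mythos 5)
Your proof is correct, and its analytical core coincides with the paper's: both derive from (\ref{eb}) a Riccati inequality for the slope, $\frac{dm}{dt}\le -\frac{1}{2}m^{2}+\overline{M}$ with $\overline{M}=M^{2}+2M^{3}$ (your lower bound for the convolution via $u^{3}\ge -M^{3}$, $G>0$, $\int_{\R}G=1$ is the same estimate the paper phrases through the negative part $u_{-}$), and both finish by comparison with the explicit solution of $\dot{y}=-\frac{1}{2}y^{2}+\overline{M}$, with the same threshold $\sqrt{2\overline{M}}$ and the same blow-up time. The genuine difference is the curve along which the slope is differentiated. The paper follows the moving inflection point $\overline{x}(t)$ and needs $u_{xx}(\overline{x}(t),t)=0$ twice: once so that $\frac{d\rho_{t}}{dt}=u_{tx}$ irrespective of $\dot{\overline{x}}(t)$, and once to delete the term $uu_{xx}$ from (\ref{eb}). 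You instead follow the characteristic $\dot{q}=u(q,t)$, along which $\frac{dm}{dt}=u_{tx}+uu_{xx}$ is exactly the left-hand side of (\ref{eb}), so no vanishing of $u_{xx}$ is needed. What your route buys is robustness: it does not presuppose that the inflection point persists in time as a differentiable path $t\mapsto\overline{x}(t)$ --- an assumption the paper makes tacitly and never justifies --- whereas your characteristic exists, is unique and $C^{1}$ for as long as the strong solution does, since $H^{3}(\R)\subset C^{2}(\R)$. What it gives up is the letter of the statement: in your argument the inflection-point hypothesis plays no role at all (only the size of $u_{x}(\overline{x},0)$ matters), and the blow-up you produce is of the slope along the particle path rather than literally ``at the inflection point''; the paper's tracking, when the path exists, yields the stated conclusion verbatim. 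Finally, both proofs share the same closing informality: from $m(t)\le y(t)$ one gets, as you correctly argue, that the maximal existence time satisfies $t_{0}\le T^{*}<\infty$; upgrading ``the $H^{3}$ solution breaks down by time $T^{*}$'' to ``the slope becomes vertical'' requires the standard blow-up criterion for CH-type equations (breakdown can occur only through $\inf_{x}u_{x}\to-\infty$), which neither you nor the paper establishes for (\ref{mCH}), so your proof is at no disadvantage on this point.
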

\begin{proof}
Consider the evolution of the slope at the inflection point $t \rightarrow \overline{x}(t)$ that starts at time $0$ from an inflection point $x=\overline{x}$ of $u_0(x)$ to the right of its maximum so that
$$ \rho_0=u_x(\overline{x}(0),0)< \infty.$$

Define $\rho_t=u_x(\overline{x}(t),t), \ \ t\geq 0$. 

Equation (\ref{eb}) yields an equation for the evolution of $t\rightarrow \rho_t$. Namely, by using that $u_{xx}(\overline{x}(t),t)=0$ $(u(t)\in H^3(\mathbb R)\subset C^2(\mathbb R))$ and (\ref{tbound}) one finds 

\begin{eqnarray}
\frac{d{\rho}_t}{dt} &=& -\frac{{\rho}_t^2}{2}+u^3(\overline{x}(t),t)+\frac{u^2}{2}(\overline{x}(t),t)\nonumber\\ &-&\int_{\mathbb R}\frac{1}{2}\exp(-|\overline{x}(t)-y|)\left(u^3(y,t)+\frac{u^2}{2}(y,t)+\frac{u_y^2}{2}(y,t)  \right)dy\nonumber\\ 
&\leq &  -\frac{{\rho}_t^2}{2} +M^3+ M^2+ \int_{\mathbb R}\frac{1}{2}\exp(-|\overline{x}(t)-y|)(u_{-}^3(y,t))dy\nonumber\\ 
& \leq &-\frac{{\rho}_t^2}{2} + M^2+2M^3,
\end{eqnarray}
where $u_-$ stands for the negative part of $u$. Here, we used that $ \int_{\mathbb R}\frac{1}{2}\exp(-|\overline{x}(t)-y|)dy=1.  $

Let $\tilde{\rho}_t$ be the solution of the equation 
\begin{equation}\label{edo}
\frac{d{\tilde{\rho}_t}}{dt}=-\frac{\tilde{\rho}_t^2}{2} + \overline{M}, \ \ \tilde{\rho}_0=\rho_0,
\end{equation}
where $\overline{M}= M^2+2M^3.$

Observe that 
\begin{eqnarray*}
\frac{d}{dt}\left((\rho_t-\tilde{\rho}_t)e^{\frac{1}{2}\int_0^t(\rho_s+\tilde{\rho}_s)ds}\right)\leq 0, \ \ \rho_0-\tilde{\rho}_0=0,
\end{eqnarray*}
therefore $\rho_t\leq \tilde{\rho}_t$ for all $t>0$ for which both are well defined.  

Integrating (\ref{edo}) we obtain
\begin{eqnarray}
\coth^{-1}\left(\frac{\tilde{\rho}_t}{\sqrt{2\overline{M}}}\right)=\frac{1}{2} \ln \left| \frac{\tilde{\rho}_t+\sqrt{2\overline{M}}}{\tilde{\rho}_t-\sqrt{2\overline{M}}}\right|=\sqrt{2\overline{M}}\frac{t}{2}+k,
\end{eqnarray} 
where $k= \ln \left| \frac{{\rho}_0+\sqrt{2\overline{M}}}{{\rho}_0-\sqrt{2\overline{M}}}\right|<0$.  Since $ \lim_{t\rightarrow -2k/ \sqrt{2\overline{M}}} \tilde{\rho}_t=-\infty$ it follows that there is a time $\tau\leq -2k/ \sqrt{2\overline{M}}$  by which the slope $\rho_t = u_x(\overline{x}(t),t)$ becomes negative and vertical.
\end{proof}

\section{Weak formulation}\label{weak formulation}

		For a traveling wave $u(x,t)=\phi(\xi)$, with $\xi=x-ct$, the equation (\ref{mCH}) is equivalent to
			\begin{equation}
			\label{169}
			-c\phi^{'} +c\phi^{'''} = \phi\phi^{'''} + 2\phi^{'}\phi^{''} - 3\phi^{2}\phi^{'},
			\end{equation}
where $'$ denotes the derivative with respect to the variable $\xi$. 
			

			After an integration with respect to the variable $\xi$ we can rewrite (\ref{169}) as follows
			\begin{equation}
			\label{171}
			(\phi^{'})^{2} + 2\phi^{3} -2c\phi = ((\phi -c)^{2})^{''} +a.
			\end{equation}
for some constant of integration $a\in\R$. We see that for (\ref{171}) to make any sense it is sufficient to require $\phi\in H^{1}_{loc}(\R)$. Thus, the following definition is plausible.

\begin{definition}
\label{205}
A function $\phi\in H^{1}_{loc}(\R)$ is a traveling wave of the mCH equation if $\phi$ satisfies (\ref{171}) in distribution sense for some $a\in\R$.
\end{definition}

\section{Classification of traveling waves}\label{classification}

Theorems \ref{classificacao1} and \ref{classificacao2} classify all bounded traveling waves $\phi\in H^{1}_{loc}(\R)$, of (\ref{mCH}). The bounded traveling waves are parametrized by their maxima, minima and speeds of propagation.

\begin{theorem}
\label{classificacao1}
Consider $z\in\C\setminus\R$ such that $\Re(z)=-(m+M)/2$ satisfying the equation $m^{2} + M^{2} -|z|^{2} + mM -2c = 0$. Any bounded traveling wave of the mCH equation belongs to one of the following categories:

$(a)$ Smooth periodic: If $m<M<c$, there is a smooth periodic traveling wave $\phi(x-ct)$ of (\ref{mCH}) with $m=\min_{\xi\in\R}\phi(\xi)$ and $M=\max_{\xi\in\R}\phi(\xi)$.

$(b)$ Periodic peakons: If $m<M=c$, there is a periodic peaked traveling wave $\phi(x-ct)$ of (\ref{mCH}) with $m=\min_{\xi\in\R}\phi(\xi)$ and $M=\max_{\xi\in\R}\phi(\xi)$.

$(c)$ Periodic cuspons: If $m<c<M $, there is a periodic cusped traveling wave $\phi(x-ct)$ of (\ref{mCH}) with $m=\min_{\xi\in\R}\phi(\xi)$ and $c=\max_{\xi\in\R}\phi(\xi)$.

$(a^{'})$ Smooth periodic: If $c<M<m$, there is a smooth periodic traveling wave $\phi(x-ct)$ of (\ref{mCH}) with $M=\min_{\xi\in\R}\phi(\xi)$ and $m=\max_{\xi\in\R}\phi(\xi)$.

$(b^{'})$ Periodic peakons: If $c=M<m$, there is a periodic peaked traveling wave $\phi(x-ct)$ of (\ref{mCH}) with $M=\min_{\xi\in\R}\phi(\xi)$ and $m=\max_{\xi\in\R}\phi(\xi)$.

$(c^{'})$ Periodic cuspons: If $M<c<m$, there is a periodic cusped traveling wave $\phi(x-ct)$ of (\ref{mCH}) with $c=\min_{\xi\in\R}\phi(\xi)$ and $m=\max_{\xi\in\R}\phi(\xi)$.

$(d)$ Composite waves: For each $c\in \R$ fixed, with $c <0$, consider the equation
									\begin{equation}
									\label{303}
									2a = (M+m)(|z|^{2} -mM)
									\end{equation}
in space $(m,M,\Im(z))$ with $z\in\C\setminus\R$ satisfying, less than a change of variables, the hyperboloid of two sheet 
					\begin{equation}
					\label{304}
					\frac{m^{2}}{\alpha^{2}} - \frac{M^{2}}{\beta^{2}} - \frac{\Im(z)^{2}}{\gamma^{2}} = 1,
					\end{equation}
where $\alpha = \sqrt{-2c}$, $\beta = 2\sqrt{-c}$ and $\gamma = \sqrt{-c}$. For any $(m,M,c)$ $\in \{m <c \leqslant M\}\cup\{m >c \geqslant M\}$ there is a corresponding cuspon or peakon according to $(b)$ or $(c)$ and $(b^{'})$ or $(c^{'})$. A countable number of cuspons and peakons corresponding to points $(m,M,c)$ that belong to the same hyperboloid of two sheet, may be joined at their crests to form a composite wave $\phi$. If the Lebesgue measure $\mu(\phi^{-1}(c)) = 0$, then $\phi$ is a traveling wave of (\ref{mCH}).

$(e)$ Stumpons: For $a=2c^{3} -2c^{2}$ the equation (\ref{303}) jointly with (\ref{304}) contains the points $(m,M,\Im(z)) = (c,c, \pm\sqrt{4c^{2}-2c})$. These equations correspond only to cuspons. Let $\phi$ be a composite wave obtained by joining countably many of these cuspons with each other and with intervals where $\phi\equiv c$. Then $\phi$ is a traveling wave of (\ref{mCH}) even if $\mu(\phi^{-1}(c)) >0$. 
\end{theorem}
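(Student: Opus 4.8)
The plan is to convert the distributional second-order identity (\ref{171}) into an explicit first-order ODE governed by a quartic, and then to read off each profile from that quartic's roots and its pole at $\phi=c$. First I would note that expanding $((\phi-c)^2)''=2(\phi')^2+2(\phi-c)\phi''$ turns (\ref{171}) into $2\phi^3-2c\phi-a=(\phi')^2+2(\phi-c)\phi''$ on any interval of smoothness; multiplying by $\phi'$ and using $\frac{d}{d\xi}\big[(\phi-c)(\phi')^2\big]=(\phi')^3+2(\phi-c)\phi'\phi''$, one integration gives the first integral
\[
(\phi-c)(\phi')^2=\tfrac12\phi^4-c\phi^2-a\phi+b=\tfrac12F(\phi),\qquad F(\phi):=\phi^4-2c\phi^2-2a\phi+2b,
\]
so that $(\phi')^2=F(\phi)/\big(2(\phi-c)\big)$. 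Writing $F(\phi)=(\phi-m)(\phi-M)(\phi-z)(\phi-\bar z)$ and matching the coefficients of $\phi^3,\phi^2,\phi$ against $0,-2c,-2a$ reproduces $\Re(z)=-(m+M)/2$, the stated constraint $m^2+M^2-|z|^2+mM-2c=0$, and the relation (\ref{303}); this both justifies the parametrization by $(m,M,z)$ and, after the change of variables indicated in the statement, recasts the admissible triples as the hyperboloid (\ref{304}).

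With the identity $(\phi-z)(\phi-\bar z)=(\phi-\Re z)^2+(\Im z)^2>0$ in hand, the sign of $(\phi')^2$ equals that of $(\phi-m)(\phi-M)/(\phi-c)$, and a sign/phase analysis on the interval swept by $\phi$ delivers the six local types. When $c$ lies outside the closed segment joining $m$ and $M$ the denominator never vanishes on the orbit and quadrature yields a smooth periodic profile oscillating between its two simple roots — cases $(a),(a')$. When $c$ coincides with one of the two roots bounding the orbit, the factor $(\phi-c)$ cancels and $(\phi')^2$ stays finite and nonzero there, so $\phi'$ has opposite one-sided limits: a peak — cases $(b),(b')$. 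When $c$ lies strictly between $m$ and $M$ the orbit can only reach $\phi=c$, where the simple pole forces $(\phi')^2\sim\mathrm{const}/|\phi-c|$; here I would verify the blow-up rate by integrating to get $|\phi-c|\sim|\xi|^{2/3}$ and $\phi'\sim|\xi|^{-1/3}$, whence $(\phi')^2\in L^1_{loc}$ and the cusped profile still lies in $H^1_{loc}(\R)$ — cases $(c),(c')$. In every case I would confirm the reconstructed $\phi$ solves (\ref{171}) in the sense of Definition \ref{205} by reversing the integration: differentiating the first integral and cancelling $\phi'$ returns the pointwise equation wherever $\phi'\neq0$, and the isolated extrema are covered by continuity.

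For the composite waves $(d)$ I would glue countably many crests, all seated at height $c$. The delicate point is that no Dirac mass is created in $((\phi-c)^2)''$: since $(\phi-c)\sim|\xi|^{2/3}$ dominates $\phi'\sim|\xi|^{-1/3}$, the product $(\phi-c)\phi'\to0$ at every cusp, so $((\phi-c)^2)'=2(\phi-c)\phi'$ is continuous across each junction and two integrations by parts in the weak form produce no interface term. The reconstruction of (\ref{171}) from the first integral can only fail where $\phi'=0$; for these profiles that set consists of isolated extrema together with possible flats at height $c$, so the hypothesis $\mu(\phi^{-1}(c))=0$ makes $\{\phi'=0\}$ $\mu$-null and forces the weak identity to hold everywhere it matters.

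The stumpons $(e)$ are the crux, since they permit $\mu(\phi^{-1}(c))>0$. On a flat where $\phi\equiv c$ the first integral degenerates ($0=\tfrac12F(c)$ with $F(c)\neq0$), so such a plateau cannot be certified through the quartic; instead it must be a genuine constant solution of (\ref{171}), and substituting $\phi\equiv c$ into (\ref{171}) gives exactly $a=2c^3-2c^2$. I would therefore argue that under this value of $a$ the constant $c$ solves (\ref{171}) identically, each admissible cuspon meets height $c$ with $(\phi-c)\phi'\to0$, and a plateau can be spliced in with $((\phi-c)^2)'$ remaining continuous. Testing (\ref{171}) against $\psi\in C_0^\infty$ then splits into cusp contributions (handled as in $(d)$) and plateau contributions (where every term of (\ref{171}) vanishes once $a=2c^3-2c^2$), with no interface deltas, so $\phi$ is a weak traveling wave even when the plateaus carry positive measure. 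I expect the main obstacle to be precisely this interface bookkeeping — proving that continuity of $(\phi-c)^2$ and of its first derivative suffices to annihilate all boundary terms in the distributional pairing — together with checking that the degenerate endpoint data $(m,M)=(c,c)$ isolated in $(e)$ are consistent with the factorization of $F$ in the first step.
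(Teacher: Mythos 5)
Your route is the paper's own: the first integral $(\phi-c)(\phi')^2=\tfrac{1}{2}F(\phi)$, factorization of the quartic and coefficient matching to obtain $\Re(z)=-(m+M)/2$, the constraint and (\ref{303}), the zero/pole analysis yielding smooth waves, peakons and cuspons, the gluing of pieces sharing the same constant $a$, and the forced value $a=2c^{3}-2c^{2}$ when $\mu(\phi^{-1}(c))>0$. However, two genuine gaps remain. The first concerns exhaustiveness: the theorem classifies \emph{all} bounded traveling waves in the sense of Definition \ref{205}, so before multiplying (\ref{171}) by $\phi'$ you must prove that an arbitrary $H^{1}_{loc}$ distributional solution is $C^{\infty}$ on each component of $\{\phi\neq c\}$. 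Your phrase ``on any interval of smoothness'' assumes exactly this; the paper derives it from Lemma \ref{lema1} (Lenells' regularity lemma, $(\phi-c)^{k}\in C^{j}(\R)$ for $k\geqslant 2^{j}$), and without it the classification could in principle miss non-smooth weak solutions that never touch the level $c$.

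The second gap is the one you yourself flag as ``interface bookkeeping,'' and the resolution you propose is insufficient as stated. Continuity of $(\phi-c)^{2}$ and of $((\phi-c)^{2})'=2(\phi-c)\phi'$ across every junction does \emph{not} imply that $((\phi-c)^{2})''$ has no singular part: the Cantor function is continuous with vanishing derivative a.e., yet its distributional derivative is a nonzero singular measure. This matters precisely in items $(d)$ and $(e)$: in $(d)$ the cuspons may be taken from different points of the same hyperboloid, so their widths can shrink and the junction points can accumulate, while in $(e)$ the set $\phi^{-1}(c)$ has positive measure; per-junction integration by parts does not survive such accumulation. What must actually be proved is $(TW3)$, i.e. $(\phi-c)^{2}\in W^{2,1}_{loc}(\R)$, together with $\phi'\in L^{2}_{loc}(\R)$ globally --- note that Definition \ref{205} already requires $\phi\in H^{1}_{loc}(\R)$, and your single-cuspon computation $(\phi')^{2}\in L^{1}_{loc}$ does not give this for an infinite gluing. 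The paper's Lemma \ref{lema importante} supplies the missing step: it shows that $\phi$ and $|(\phi-c)\phi'|$ are continuous, of bounded variation and Luzin $N$-functions, hence absolutely continuous, using the uniform-in-$j$ estimates $0\leqslant P_{j}(\phi)\leqslant D(c-m_{j})$, $|E_{j}|\geqslant D_{0}(c-m_{j})$ and $\int_{E_{j}}(\phi')^{2}\,d\xi\leqslant \tfrac{3}{2}D|E_{j}|$, which make all sums over the countably many pieces converge. Supplying these quantitative estimates (or an equivalent global argument excluding singular parts) is the core of the proof that your proposal leaves open.
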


\begin{theorem}
\label{classificacao2}
Consider $z= -m-r-M$ with $r\in\R$ satisfying the equation $r^{2} + m^{2} + M^{2} +rm +rM + mM -2c = 0$. Any bounded traveling wave of the mCH equation belongs, less than an alternation of $z$ for $r$, to one of the following categories:

$(a)$ Smooth periodic: If $z<r<m<M<c$, there is a smooth periodic traveling wave $\phi(x-ct)$ of (\ref{mCH}) with $m=\min_{\xi\in\R}\phi(\xi)$ and $M=\max_{\xi\in\R}\phi(\xi)$.

$(b)$ Smooth with decay: If $z<r=m<M<c$, there is a smooth traveling wave $\phi(x-ct)$ of (\ref{mCH}) with $m=\inf_{\xi\in\R}\phi(\xi)$, $M=\max_{\xi\in\R}\phi(\xi)$ and $\phi\downarrow m$ exponentially as $\xi\rightarrow\pm\infty$.

$(c)$ Kinks: If $z=m<M=r<c$, there is a kinked smooth traveling wave $\phi(x-ct)$ of (\ref{mCH}) with $m=\inf_{\xi\in\R}\phi(\xi)$, $M=\sup_{\xi\in\R}\phi(\xi)$ and, $\phi\uparrow M$ and $\phi\downarrow m$ exponentially as $\xi\rightarrow\pm\infty$, respectively.

$(d)$ Periodic peakons: If $z<r<m<M=c$, there is a periodic peaked traveling wave $\phi(x-ct)$ of (\ref{mCH}) with $m=\min_{\xi\in\R}\phi(\xi)$ and $M=\max_{\xi\in\R}\phi(\xi)$.

$(e)$ Peakons with decay: If $z<r=m<M=c$, there is a peaked traveling wave $\phi(x-ct)$ of (\ref{mCH}) with $m=\inf_{\xi\in\R}\phi(\xi)$, $M=\max_{\xi\in\R}\phi(\xi)$ and $\phi\downarrow m$ exponentially as $\xi\rightarrow\pm\infty$.

$(f)$ Periodic cuspons: If $z<r<m<c<M $, there is a periodic cusped traveling wave $\phi(x-ct)$ of (\ref{mCH}) with $m=\min_{\xi\in\R}\phi(\xi)$ and $c=\max_{\xi\in\R}\phi(\xi)$.

$(g)$ Cuspons with decay: If $z<r=m<c<M$, there is a cusped traveling wave $\phi(x-ct)$ of (\ref{mCH}) with $m=\inf_{\xi\in\R}\phi(\xi)$, $c=\max_{\xi\in\R}\phi(\xi)$ and $\phi\downarrow m$ exponentially as $\xi\rightarrow\pm\infty$.

$(a^{'})$ Smooth periodic: If $c<M<m<r<z$, there is a smooth periodic traveling wave $\phi(x-ct)$ of (\ref{mCH}) with $M=\min_{\xi\in\R}\phi(\xi)$ and $m=\max_{\xi\in\R}\phi(\xi)$.

$(b^{'})$ Smooth with decay: If $c<M<m=r<z$, there is a smooth traveling wave $\phi(x-ct)$ of (\ref{mCH}) with $M=\min_{\xi\in\R}\phi(\xi)$, $m=\sup_{\xi\in\R}\phi(\xi)$ and $\phi\uparrow m$ exponentially as $\xi\rightarrow\pm\infty$.

$(c^{'})$ Kinks: If $c<r=M<m=z$, there is a kinked smooth traveling wave $\phi(x-ct)$ of (\ref{mCH}) with $M=\inf_{\xi\in\R}\phi(\xi)$, $m=\sup_{\xi\in\R}\phi(\xi)$ and, $\phi\downarrow M$ and $\phi\uparrow m$ exponentially as $\xi\rightarrow\pm\infty$, respectively.

$(d^{'})$ Periodic peakons: If $c=M<m<r<z$, there is a periodic peaked traveling wave $\phi(x-ct)$ of (\ref{mCH}) with $M=\min_{\xi\in\R}\phi(\xi)$ and $m=\max_{\xi\in\R}\phi(\xi)$.

$(e^{'})$ Peakons with decay: If $c=M<m=r<z$, there is a peaked traveling wave $\phi(x-ct)$ of (\ref{mCH}) with $M=\min_{\xi\in\R}\phi(\xi)$, $m=\sup_{\xi\in\R}\phi(\xi)$ and $\phi\uparrow m$ exponentially as $\xi\rightarrow\pm\infty$.

$(f^{'})$ Periodic cuspons: If $M<c<m<r<z$, there is a periodic cusped traveling wave $\phi(x-ct)$ of (\ref{mCH}) with $c=\min_{\xi\in\R}\phi(\xi)$ and $m=\max_{\xi\in\R}\phi(\xi)$.

$(g^{'})$ Cuspons with decay: If $M<c<m=r<z$, there is a cusped traveling wave $\phi(x-ct)$ of (\ref{mCH}) with $c=\min_{\xi\in\R}\phi(\xi)$, $m=\sup_{\xi\in\R}\phi(\xi)$ and $\phi\uparrow m$ exponentially as $\xi\rightarrow\pm\infty$.

$(h)$ Composite waves: For each $c\in \R$ fixed, with $c >0$, the equation
									\begin{equation}
									\label{203}
									2a = -(M+m)r^{2} - (m+r)M^{2} - (M+r)m^{2} -2mrM
									\end{equation}
describes three planes intersecting in space $(m,M,r)$ with $r\in\R$ satisfying, less than a change of variables, the ellipsoid
					\begin{equation}
					\label{204}
					\frac{m^{2}}{\alpha^{2}} + \frac{M^{2}}{\beta^{2}} + \frac{r^{2}}{\gamma^{2}} = 1,
					\end{equation}
where $\alpha =\beta = 2\sqrt{c}$ and $\gamma = \sqrt{c}$. For any $(m,M,r,c)\in \{z<r\leqslant m <c \leqslant M\}\cup\{z>r\geqslant m >c \geqslant M\}$ there is a corresponding cuspon or peakon according to $(d)-(g)$ and $(d^{'})-(g^{'})$. A countable number of cuspons and peakons corresponding to points $(m,M,r,c)$ that belong to the same ellipsoid, may be joined at their crests to form a composite wave $\phi$. If the Lebesgue measure $\mu(\phi^{-1}(c)) = 0$, then $\phi$ is a traveling wave of (\ref{mCH}).

$(i)$ Stumpons: For $a=2c^{3} -2c^{2}$ the equation (\ref{203}) jointly with (\ref{204}) contains the points $(m,M,r) = (c,c,-c\pm\sqrt{-2c^{2}+2c})$. These equations correspond only to cuspons. Let $\phi$ be a composite wave obtained by joining countably many of these cuspons with each other and with intervals where $\phi\equiv c$. Then $\phi$ is a traveling wave of (\ref{mCH}) even if $\mu(\phi^{-1}(c)) >0$. 
\end{theorem}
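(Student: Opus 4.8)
The plan is to convert the distributional identity (\ref{171}) into an explicit first-order ODE, run a phase-plane analysis keyed to the roots of the resulting quartic, and finally treat the glued profiles of parts $(h)$ and $(i)$ separately. First I would establish regularity together with a first integral. On any open interval where $\phi\neq c$ the factor $(\phi-c)^{-1}$ is smooth, so (\ref{171}) is a regular second-order ODE and a bootstrap argument gives $\phi\in C^{\infty}$ there. Rewriting (\ref{171}) as $(\phi')^{2}+2(\phi-c)\phi''=2\phi^{3}-2c\phi-a$, multiplying by $\phi'$ and using $\frac{d}{d\xi}\big[(\phi-c)(\phi')^{2}\big]=(\phi')^{3}+2(\phi-c)\phi'\phi''$, one integration gives
$$(\phi-c)(\phi')^{2}=\tfrac{1}{2}Q(\phi),\qquad Q(\phi)=\phi^{4}-2c\phi^{2}-2a\phi+2b,$$
for a constant $b$, that is $(\phi')^{2}=Q(\phi)/[2(\phi-c)]$ away from $\phi=c$. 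Since $Q$ has no cubic term its four roots sum to zero, so writing them as $m,M,r$ and $z=-(m+M+r)$ and matching the coefficient of $\phi^{2}$ to $-2c$ reproduces the constraint $r^{2}+m^{2}+M^{2}+rm+rM+mM=2c$, while matching the coefficient of $\phi$ to $-2a$ reproduces (\ref{203}).

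Next I would read the elementary profiles $(a)$--$(g)$ and their primed analogues directly off $F(\phi):=Q(\phi)/[2(\phi-c)]$, using that $(\phi')^{2}=F(\phi)\geq0$ confines $\phi$ to the closure of a component of $\{F>0\}$ and that a bounded orbit joins two consecutive boundary points of such a component. The wave type is then dictated by the nature of the two endpoints: two simple roots of $Q$ lying strictly on one side of $c$ give a smooth periodic profile; replacing a simple root by a double root ($r=m$ or $M=r$) makes $\int d\phi/\sqrt{F}$ diverge logarithmically, so the period becomes infinite and one obtains the exponentially decaying waves, and the monotone heteroclinic kinks when both ends are double roots; an endpoint at which $c$ is a simple root of $Q$ yields $(\phi')^{2}\to Q'(c)/2\neq0$, a finite-slope corner, hence a peakon; and an endpoint at which $c$ is the pole but not a root yields $(\phi')^{2}\to\infty$ with local profile $\phi-c\sim-|\xi-\xi_{0}|^{2/3}$, hence a cuspon. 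In the peaked and cusped cases I must verify that the constructed $\phi$ is a weak solution in the sense of Definition \ref{205}: the decisive point is that near a crest $\xi_{0}$ the function $(\phi-c)^{2}$ is $C^{1}$ (it behaves like $(\xi-\xi_{0})^{2}$ at a peakon and like $|\xi-\xi_{0}|^{4/3}$ at a cuspon, so $((\phi-c)^{2})'$ is continuous and vanishes at $\xi_{0}$), whence $((\phi-c)^{2})''$ carries no Dirac mass and is locally integrable; together with $(\phi')^{2}\in L^{1}_{loc}$ (so that $\phi\in H^{1}_{loc}$) this gives the distributional identity (\ref{171}).

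For the composite waves of part $(h)$ I would concatenate, at a shared crest of height $c$, two distinct elementary arcs carrying the same constants $a,b$. For fixed $c$ this common-level-set requirement says exactly that the triples $(m,M,r)$ lie on (\ref{203}) inside the quadric $m^{2}+M^{2}+r^{2}+mM+mr+Mr=2c$; diagonalizing this positive-definite form shows it is the ellipsoid (\ref{204}) with $\alpha=\beta=2\sqrt{c}$ and $\gamma=\sqrt{c}$. Gluing countably many arcs produces a profile $\phi$ that solves (\ref{171}) distributionally by the same $C^{1}$ regularity of $(\phi-c)^{2}$ at every junction; the sole place the argument can break is on $\phi^{-1}(c)$, and the hypothesis $\mu(\phi^{-1}(c))=0$ ensures any singular part of $((\phi-c)^{2})''$ is supported on a null set, so the identity of Definition \ref{205} survives. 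Part $(i)$ rests on the algebraic fact that $\phi\equiv c$ solves (\ref{171}) if and only if $a=2c^{3}-2c^{2}$; at this value $Q(c)=Q'(c)=0$, i.e.\ $c$ is a double root of $Q$, which forces $m=M=c$ and $r,z=-c\pm\sqrt{-2c^{2}+2c}$, the points named in the theorem, lying on the cuspon branch. One may then interpolate arbitrary plateaus $\phi\equiv c$ between the cusped arcs: at each plateau edge $(\phi-c)\phi'\to0$, so $((\phi-c)^{2})'$ is again continuous and no Dirac mass appears, while on a plateau $(\phi')^{2}=0$, $((\phi-c)^{2})''=0$ and $2c^{3}-2c^{2}=a$, so (\ref{171}) holds there. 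This is precisely why the measure condition may be dropped in $(i)$ but not in $(h)$.

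I expect the main obstacle to be this last distributional bookkeeping in parts $(h)$ and $(i)$: rigorously excluding a singular contribution to $((\phi-c)^{2})''$ at the crests and plateau edges, and isolating the exact role of the hypothesis $\mu(\phi^{-1}(c))=0$ versus its removal for the stumpons. Once the first integral and the $C^{1}$ behaviour of $(\phi-c)^{2}$ at crests are secured, the phase-plane classification $(a)$--$(g)$ is comparatively routine.
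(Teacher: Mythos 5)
Your overall route is the same as the paper's: your first integral $(\phi-c)(\phi')^{2}=\tfrac{1}{2}Q(\phi)$ is exactly equation (\ref{180}) (with your $b$ equal to $-d_{i}$), your reading of wave types off simple zeros, double zeros and the pole of $F$ is the same Lenells-type local analysis (\ref{eq polinomial})--(\ref{funcao com polo}), and the coefficient matching that produces $z=-m-r-M$, the quadric (\ref{elipsoide}) and (\ref{203}) is identical. There are, however, two genuine gaps. The first is in part $(h)$: you glue arcs ``carrying the same constants $a,b$''. That is too strong. If both $a$ and $b$ are shared, the two arcs come from the same quartic and are therefore copies (or reflections) of one another, so your construction produces only periodic repetitions of a single cuspon or peakon, never a genuine composite wave. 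What the theorem needs, and what the paper encodes in condition $(TW2)$ of Lemma \ref{lema4}, is one \emph{global} constant $a$ together with a \emph{separate} constant $d_{i}$ for each arc $E_{i}$; the composite waves mix arcs with different $d_{i}$, tied together only by (\ref{203}) and the ellipsoid.

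The second gap is the distributional bookkeeping, which you rightly call the crux but do not actually resolve. The claim that $\mu(\phi^{-1}(c))=0$ ``ensures any singular part of $((\phi-c)^{2})''$ is supported on a null set, so the identity survives'' is a non sequitur: every singular measure (a Dirac mass, a Cantor measure) is supported on a null set, so nothing is excluded this way. Likewise, $C^{1}$-smoothness of $(\phi-c)^{2}$ at each individual crest is insufficient once countably many arcs accumulate: a continuous function that is smooth off a closed null set and has no atom at any single point can still carry a Cantor-type singular part in its second derivative, and one must also prove $\phi\in H^{1}_{loc}(\R)$ in the first place, i.e. $\sum_{j}\int_{E_{j}}(\phi')^{2}\,d\xi<\infty$. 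This is precisely what the paper's Lemma \ref{lema importante} supplies: the quantitative bounds $0\leqslant P_{j}(\phi)\leqslant D(c-m_{j})$ and $|E_{j}|\geqslant D_{0}(c-m_{j})$ yield $\int_{E_{j}}(\phi')^{2}\,d\xi\leqslant\tfrac{3}{2}D|E_{j}|$ and a total-variation bound for $(\phi-c)\phi'$ by $\tfrac{8}{3}\sum_{j}|E_{j}|<\infty$, after which absolute continuity of $\phi$ and of $((\phi-c)^{2})'=2(\phi-c)\phi'$ follows from the BV $+$ continuity $+$ $N$-function criterion (Lemmas \ref{lema0}--\ref{lema7}); it is this absolute continuity, not the measure hypothesis, that rules out singular parts. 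The true role of $\mu(\phi^{-1}(c))=0$ is the one isolated in Lemma \ref{lema4}: it lets the pointwise equation on $E=\R\setminus C$ become an a.e. equation on $\R$, whereas if $\mu(C)>0$ the equation must also hold a.e. on $C$, where $\phi'=0$ and $((\phi-c)^{2})''=0$ a.e. (Lemmas \ref{lema2}, \ref{lema3}), forcing $a=2c^{3}-2c^{2}$; that, and not the absence of Dirac masses, is the dichotomy between $(h)$ and $(i)$. For the simple waves $(a)$--$(g)$, where any compact interval meets only finitely many arcs, your local argument does suffice.
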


			In addition to Theorems \ref{classificacao1} and \ref{classificacao2} is possible to characterize the existence or no of some unbounded traveling waves of the mCH equation.
			
\begin{theorem}
\label{classificacao3}
If the polynomial $P(\phi)$ in (\ref{188}) has only a real zero, say $m$, then the mCH equation has not bounded solutions $\phi$ to $c<m$ or $m<c$ with $m=\min_{\xi\in\R}\phi(\xi)$ or $m=\max_{\xi\in\R}\phi(\xi)$, respectively. In particular, the mCH equation has traveling wave solutions with exponential behavior.
\end{theorem}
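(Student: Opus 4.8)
The plan is to deduce everything from the first integral (\ref{188}) of the traveling-wave equation, so I begin by recalling its form. Rewriting (\ref{171}) as $2(\phi-c)\phi''=2\phi^{3}-2c\phi-(\phi')^{2}-a$ (using $((\phi-c)^{2})''=2(\phi')^{2}+2(\phi-c)\phi''$), then multiplying by $\phi'$ and integrating once more, gives the quadrature
\[
(\phi-c)(\phi')^{2}=\tfrac12 P(\phi),\qquad P(\phi)=\phi^{4}-2c\phi^{2}-2a\phi+2b,
\]
so that $(\phi')^{2}=\dfrac{P(\phi)}{2(\phi-c)}$ wherever $\phi\neq c$. Since $(\phi')^{2}\geq 0$, on every interval where the wave is non-constant the sign of $P(\phi)$ must coincide with the sign of $\phi-c$; this single observation will drive the whole proof.

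Next I would exploit the hypothesis that $P$ has a unique real zero $m$. Because $P$ is a monic quartic with real coefficients, its non-real zeros occur in conjugate pairs, so a single real zero is impossible unless $m$ has even multiplicity; hence $P(\phi)=(\phi-m)^{2}(\phi-z)(\phi-\overline z)$ with $z\in\C\setminus\R$ (the degenerate quadruple root $P=(\phi-m)^{4}$ being handled in the same way). For real $\phi$ one has $(\phi-z)(\phi-\overline z)=(\phi-\Re(z))^{2}+\Im(z)^{2}>0$, whence $P(\phi)\geq 0$ everywhere, with equality only at $\phi=m$. Combined with the sign rule above, this forces any non-constant wave to satisfy $\phi>c$.

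The two forbidden regimes then fall out by cases. If $m<c$ and $m=\max_{\xi}\phi$, then $\phi\leq m<c$ gives $\phi-c<0$ while $P(\phi)\geq 0$, so $(\phi')^{2}\leq 0$; thus $\phi'\equiv 0$ and $\phi\equiv m$, and no non-trivial bounded wave exists. If instead $c<m$ and $m=\min_{\xi}\phi$, then the admissible values are $\phi\geq m>c$, where $(\phi')^{2}>0$ for every $\phi>m$ and vanishes only at the turning point $\phi=m$. As $P$ has no real zero above $m$, the slope $\phi'$ never returns to zero once $\phi$ exceeds $m$, so $\phi$ grows monotonically without bound and again no bounded wave with $m=\min_{\xi}\phi$ can exist.

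Finally, the exponential behavior asserted in the statement comes from linearizing the quadrature at the double root in the admissible case $c<m$: near $\phi=m$ one has $(\phi')^{2}=k^{2}(\phi-m)^{2}\bigl(1+o(1)\bigr)$ with $k^{2}=\dfrac{|m-z|^{2}}{2(m-c)}>0$, so $\phi'\sim\pm k(\phi-m)$ and $\phi-m\sim e^{\pm k\xi}$. Hence the orbit tends to the level $m$ exponentially on one side while escaping to $+\infty$ on the other, producing exactly the unbounded, exponentially-decaying traveling waves claimed. I expect the only delicate point to be this asymptotic analysis at the repeated root—confirming that the approach to $m$ is genuinely exponential (and merely algebraic in the non-generic quadruple-root case) and that the trajectory cannot re-enter a bounded region; the sign bookkeeping for $P$ and the case distinction are routine once the quadrature (\ref{188}) is available.
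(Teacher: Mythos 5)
Your proposal is correct and takes essentially the same route as the paper: both work from the quadrature $(\phi')^{2}=P(\phi)/(2(\phi-c))$, factor the quartic as a double real zero $m$ times a positive complex-conjugate quadratic factor, and conclude via the sign obstruction (no nonconstant waves on the side $\phi<c$, hence none with $\max\phi=m<c$) together with the exponential approach to the double zero and unbounded growth on the other side when $c<m$. The only cosmetic differences are your sign convention for $P$ (the negative of the paper's, harmless since the zeros coincide) and your explicit handling of the quadruple-root degeneracy, which the paper sidesteps by asserting the zero has double multiplicity.
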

	
\begin{remark}
Differently from the classical case, the mCH equation has not traveling wave solutions with similar behavior to a parabola because this is only possible if the polynomial $P(\phi)$ in (\ref{188}) has only a real zero with multiplicity equal to one.
\end{remark}


\begin{remark}
			For the modified Degasperis-Procesi equation
							$$u_{t}-u_{xxt} = uu_{xxx} +3u_{x}u_{xx} - 4u^{2}u_{x},$$
we can also classify all traveling wave solutions for this equation and get similar results like the above theorems. In order to get this, we proceed the same way as for the mCH equation, by considering in the next section $p(v)=\frac{8}{3}\phi^{3} -2c\phi - a$ and Lemma 1 of \cite{Lenells2} in the proof of Lemma \ref{lema4} in this paper. Moreover, to obtain a equation like $(\ref{180})$ we multiply by $\phi\phi^{'}$ the weak formulation to mDP equation given by
								$$\frac{8}{3}\phi^{3} -2c\phi = ((\phi-c)^{2})^{''} + a,$$
where $a$ is a constant of integration as in $(\ref{171})$.
\end{remark}

\section{Proof of Theorems \ref{classificacao1}, \ref{classificacao2} and \ref{classificacao3}}\label{proof classification}

\begin{lemma}
\label{lema1}
Let $p(v)$ be a polynomial with real coefficients. Assume that $v\in H^{1}_{loc}(\R)$ satisfies 
			$$(v^{2})^{''} = (v^{'})^{2} + p(v)\ \ \text{in}\ \ \mathcal{D}^{'}(\R).$$
Then, $$v^{k}\in C^{j}(\R)\ \ \text{para}\ \ k\geqslant 2^{j}.$$
\end{lemma}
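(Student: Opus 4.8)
The plan is to prove the assertion, which I abbreviate $S(j)$: ``$v^k\in C^j(\R)$ for every integer $k\geq 2^j$'', by induction on $j$, the guiding idea being that taking powers of $v$ regularizes the equation. Throughout I write $'$ for the distributional derivative and use that $v\in H^1_{loc}(\R)\subset C^0(\R)$ in one dimension, so every power $v^k$ with $k\geq 1$ is continuous; this is exactly the base case $S(0)$. I also record the seed of the bootstrap: the hypothesis reads $(v^2)''=(v')^2+p(v)$, whose right-hand side lies in $L^1_{loc}(\R)$ because $(v')^2\in L^1_{loc}$ and $p(v)\in C^0$. Hence $v^2\in W^{2,1}_{loc}\subset C^1$.

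The computational heart is two identities. First, for every integer $m\geq 2$ one has, in the sense of distributions,
\[
(v^m)'' = \frac{m(2m-3)}{2}\,v^{m-2}(v')^2 + \frac{m}{2}\,v^{m-2}p(v),
\]
which I would obtain by writing $(v^m)'=\tfrac{m}{2}\,v^{m-2}(v^2)'$, differentiating via the Leibniz rule, and substituting the hypothesis $(v^2)''=(v')^2+p(v)$. The low-regularity Leibniz step needs justification, but $v^{m-2}\in C^0\cap H^1_{loc}$ together with $(v^2)'\in C^0$ and $(v^2)''\in L^1_{loc}$ is enough. Second, for any splitting $a+b=m$ with $a,b\geq 1$,
\[
v^{m-2}(v')^2 = \frac{1}{ab}\,(v^a)'(v^b)',
\]
which follows at once from $(v^a)'(v^b)'=ab\,v^{a+b-2}(v')^2$.

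With these in hand the inductive step $S(j-1)\Rightarrow S(j)$ for $j\geq 2$ proceeds as follows. Given $k\geq 2^j$, use the master formula. The polynomial term $v^{k-2}p(v)$ is a sum of powers of $v$ with exponents all $\geq k-2\geq 2^{j-1}$, hence lies in $C^{j-1}$ by the inductive hypothesis. For the term $v^{k-2}(v')^2$ I would apply the product identity with the balanced split $a=\lceil k/2\rceil$, $b=\lfloor k/2\rfloor$, so that $a,b\geq 2^{j-1}$; then $v^a,v^b\in C^{j-1}$, whence $(v^a)',(v^b)'\in C^{j-2}$ and the product lies in $C^{j-2}$. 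Therefore $(v^k)''\in C^{j-2}$, and since a distribution with continuous second derivative is $C^2$, integrating twice upgrades this to $v^k\in C^j$. The remaining transition $S(0)\Rightarrow S(1)$ is treated separately by replacing $C^{j-2}$ with $L^1_{loc}$: the master formula gives $(v^k)''\in L^1_{loc}$ for all $k\geq 2$, so $v^k\in W^{2,1}_{loc}\subset C^1$.

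The step I expect to be the main obstacle is precisely the odd exponents, i.e.\ ensuring the dyadic bound for all integers $k$ rather than only powers of two. The naive reduction $v^{k-2}(v')^2=\tfrac{1}{((k-1)/2)^2}\,v\,\big((v^{(k-1)/2})'\big)^2$ leaves a stray factor $v\in C^0$ that destroys any smoothness beyond $C^0$, so it is adequate only at the first level. The balanced product identity is exactly what removes this difficulty: splitting $k=\lceil k/2\rceil+\lfloor k/2\rfloor$ distributes the two derivatives onto two comparably large powers of $v$, both regular by induction, with no leftover factor of $v$. This is also where the sharp threshold $2^j$ comes from, since the requirement $\lfloor k/2\rfloor\geq 2^{j-1}$ needed to invoke the previous level is equivalent to $k\geq 2^j$.
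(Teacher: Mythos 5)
Your proof is correct. Note that the paper does not actually prove this lemma at all—its ``proof'' is a citation to Lemma 1, p.~404 of \cite{Lenells}—and your argument is essentially a self-contained reconstruction of that cited proof: the same induction on $j$ with base case $H^{1}_{loc}(\R)\subset C^{0}(\R)$ and seed $v^{2}\in W^{2,1}_{loc}\subset C^{1}$, the same identity for $(v^{k})''$ obtained by differentiating $(v^{k})'=\frac{k}{2}v^{k-2}(v^{2})'$ and substituting the equation, and the same device of rewriting $v^{k-2}(v')^{2}$ as a product of derivatives of two powers of size roughly $k/2$, which is exactly where the dyadic threshold $k\geqslant 2^{j}$ enters; your balanced split $k=\lceil k/2\rceil+\lfloor k/2\rfloor$ simply handles even and odd exponents uniformly where the original argument distinguishes cases.
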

\begin{proof}
See \cite{Lenells}, Lemma 1, page 404.
\end{proof}

\begin{lemma}
\label{lema2}
Let $f:\R\longrightarrow\R$ be an absolutely continuous function. Then $f^{'}=0$ a.e. on $f^{-1}(c)$ for any $c\in\R$.
\end{lemma}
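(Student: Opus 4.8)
The plan is to reduce the statement to a pointwise assertion that holds on a full-measure subset of the level set, using two classical ingredients: an absolutely continuous function is differentiable almost everywhere (with $f'\in L^1_{loc}$), and the Lebesgue density theorem. Write $E=f^{-1}(c)=\{x\in\R : f(x)=c\}$. Since $f$ is continuous, $E$ is closed, hence Lebesgue measurable, and it therefore suffices to produce a subset of full measure in $E$ on which $f'$ exists and vanishes.

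First I would throw away two null sets. Because $f$ is absolutely continuous it is differentiable outside a null set $N_1$. By the Lebesgue density theorem, almost every point of $E$ is a point of density $1$ of $E$; let $N_2\subset E$ be the null set of points of $E$ that fail this. Then $E_0:=E\setminus(N_1\cup N_2)$ satisfies $\mu(E\setminus E_0)=0$, and at each $x_0\in E_0$ the derivative $f'(x_0)$ exists and $x_0$ is a density point of $E$. It now remains only to check that $f'$ vanishes on $E_0$.

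The key step is to show $f'(x_0)=0$ for every $x_0\in E_0$, which I would argue by contradiction. Suppose $L:=f'(x_0)\neq 0$. Then there is $\delta>0$ so that for $0<|x-x_0|<\delta$ the difference quotient $(f(x)-f(x_0))/(x-x_0)$ lies within $|L|/2$ of $L$, hence stays bounded away from $0$; consequently $f(x)\neq f(x_0)=c$, i.e. $x\notin E$, for all such $x$. Thus $E\cap(x_0-\delta,x_0+\delta)=\{x_0\}$, which forces
$$\lim_{r\to 0^{+}}\frac{\mu\bigl(E\cap(x_0-r,x_0+r)\bigr)}{2r}=0,$$
contradicting that $x_0$ is a density point of $E$. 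Hence $f'\equiv 0$ on $E_0$, and since $E_0$ exhausts $E$ up to a null set we conclude $f'=0$ a.e. on $f^{-1}(c)$.

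I do not anticipate a genuine obstacle, since every ingredient is standard; the only point needing care is the contradiction step, where one must translate the analytic condition $f'(x_0)\neq 0$ into the geometric fact that $E$ cannot accumulate at $x_0$, so that $x_0$ cannot be a density point. As an alternative one could invoke the area formula $\int_{E}|f'|\,dx=\int_{\R}\#\{x\in E : f(x)=y\}\,dy$, whose integrand vanishes for every $y\neq c$ and is supported on the single value $y=c$, giving $\int_{E}|f'|\,dx=0$ directly; but the density argument above is fully self-contained and requires no machinery beyond the two cited facts.
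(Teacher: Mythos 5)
Your proof is correct. Note, however, that the paper does not actually prove this lemma at all: its ``proof'' is a citation to Lenells (\emph{J.\ Differential Equations} 217 (2005), Lemma 2, p.\ 405), so your argument is not so much a different route as the route the paper outsources. Your two ingredients --- a.e.\ differentiability of absolutely continuous functions and the Lebesgue density theorem --- combine exactly as you say: if $f'(x_0)=L\neq 0$ at a point $x_0$ of the level set $E=f^{-1}(c)$, the difference quotient is bounded away from zero near $x_0$, so $E\cap(x_0-\delta,x_0+\delta)=\{x_0\}$ and $x_0$ cannot be a density point of $E$; discarding the two null sets then finishes the proof. This is the standard proof of the cited result, and it is fully rigorous as written. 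Two small remarks. First, the density theorem is more than you need: the isolated points of $E$ form a countable (hence null) set, and at any non-isolated $x_0\in E$ where $f$ is differentiable one can take a sequence $x_n\to x_0$ with $x_n\in E\setminus\{x_0\}$, so that
\begin{equation*}
f'(x_0)=\lim_{n\to\infty}\frac{f(x_n)-f(x_0)}{x_n-x_0}=0
\end{equation*}
directly, with no measure theory beyond a.e.\ differentiability. Second, your alternative via the Banach indicatrix (area) formula is also valid for absolutely continuous functions, but it invokes heavier machinery than the problem requires; the density (or accumulation-point) argument is the one to keep.
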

\begin{proof}
See \cite{Lenells}, Lemma 2, page 405.
\end{proof}

\begin{lemma}
\label{lema3}
Let $f\in W^{2,1}_{loc}(\R)$. Then, $f^{''}=0$ a.e. on $f^{-1}(c)$ for any $c\in\R$.
\end{lemma}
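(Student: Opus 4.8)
The plan is to deduce the statement from two successive applications of Lemma \ref{lema2}. Since $f\in W^{2,1}_{loc}(\R)$, both $f$ and its weak derivative $f'$ belong to $W^{1,1}_{loc}(\R)$ and are therefore (locally) absolutely continuous; in particular $f$ is $C^{1}$ after choosing the continuous representative of $f'$, and $(f')'=f''$ in $L^{1}_{loc}$. I would fix these representatives once and for all and introduce the two level sets $E:=f^{-1}(c)$ and $Z:=(f')^{-1}(0)$.

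First I would apply Lemma \ref{lema2} directly to $f$. Because $f$ is absolutely continuous, the lemma yields $f'=0$ a.e.\ on $E=f^{-1}(c)$. Rephrased set-theoretically, this says that $E\setminus Z$ is a Lebesgue-null set: up to measure zero, the level set $E$ is contained in the zero set $Z$ of $f'$.

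Next I would apply Lemma \ref{lema2} a second time, now to the absolutely continuous function $g:=f'$, whose derivative is $g'=f''$. Taking the constant $0$ in the lemma gives $g'=0$ a.e.\ on $g^{-1}(0)=Z$; that is, $f''=0$ a.e.\ on $Z$.

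Finally I would combine the two steps by splitting $E=(E\cap Z)\cup(E\setminus Z)$. The second piece is null by the first step and so is irrelevant to an almost-everywhere assertion, while on $E\cap Z\subseteq Z$ the second step already gives $f''=0$ a.e. Hence $f''=0$ a.e.\ on $E=f^{-1}(c)$. The only delicate point — and the main, if mild, obstacle — is the bookkeeping with almost-everywhere representatives: one must fix the continuous representative of $f'$ before defining $Z=(f')^{-1}(0)$, so that "$f'=0$ a.e.\ on $E$" genuinely forces almost every point of $E$ into $Z$ where the second application of Lemma \ref{lema2} is available. Once this identification is made, the two invocations chain together with no further estimation.
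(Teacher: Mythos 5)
Your proof is correct and follows essentially the same route as the paper's: the paper simply defers to Lemma 3 of \cite{Lenells}, whose argument is precisely this double application of Lemma \ref{lema2} --- first to $f$ to get $f'=0$ a.e.\ on $f^{-1}(c)$, then to the absolutely continuous representative of $f'$ at the level $0$ --- combined with the same null-set bookkeeping. Your attention to fixing the continuous representative of $f'$ before forming $(f')^{-1}(0)$ is exactly the right point of care, and no step is missing.
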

\begin{proof}
See \cite{Lenells}, Lemma 3, page 405.
\end{proof}

\begin{lemma}
\label{lema4}
A function $\phi\in H^{1}_{loc}(\R)$ is a traveling wave of (\ref{mCH}) with speed $c$ if and only if
the following three statements hold:

\noindent\noindent $(TW1)$ There are disjoint open intervals $E_{i}$, $i\geqslant 1$, and a closed set $C$ such that $\R\setminus C = \bigcup^{+\infty}_{i=1}E_{i}$, $\phi\in C^{\infty}(E_{i})$ for $i\geqslant 1$, $\phi(\xi)\neq c$ for $\xi\in E\ \dot{=}\ \bigcup_{i=1}^{+\infty}E_{i}$, and $\phi(\xi)=c$ for $\xi\in C$.

\noindent\noindent $(TW2)$ There is an $a\in\R$ such that

\ $(i)$ For each $i\geqslant 1$, there exists $d_{i}\in\R$ such that
				\begin{equation}
				\label{178}
				(\phi^{'})^{2} = F(\phi)\ \text{for}\ \xi\in E_{i}\ \ \text{and}\ \ \phi\rightarrow c\ \text{at any finite endpoint of}\ E_{i},
				\end{equation}
where, 
				\begin{equation}
				\label{179}
				F(\phi) = \frac{\phi^{2}\left(c-\frac{\phi^{2}}{2}\right) +a\phi +d_{i}}{c-\phi};
				\end{equation}
				
\ $(ii)$ If $C$ has strictly positive Lebesgue measure, $\mu(C) >0$, then $a=2c^{3} -2c^{2}$.

\noindent\noindent $(TW3)$ $(\phi -c)^{2}\in W^{2,1}_{loc}(\R)$.
\end{lemma}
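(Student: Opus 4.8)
The plan is to pivot on the substitution $v=\phi-c$, under which the weak equation $(\ref{171})$ becomes
$$(v^{2})^{''}=(v^{'})^{2}+p(v),\qquad p(v)=2v^{3}+6cv^{2}+(6c^{2}-2c)v+(2c^{3}-2c^{2}-a),$$
which is precisely the form required by Lemma $\ref{lema1}$. First I would record the consequence of that lemma: every even power $v^{2^{j}}$ is $C^{j}$, so on any interval where $v\neq 0$ (where $v$ has a fixed sign, being continuous and nonvanishing) one recovers $v=\pm(v^{2^{j}})^{1/2^{j}}\in C^{j}$ for every $j$, hence $\phi\in C^{\infty}$ there. This single observation furnishes the smoothness required in $(TW1)$ with no delicate turning-point analysis.

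For the forward implication I would set $C=\phi^{-1}(c)$, which is closed since $\phi\in H^{1}_{loc}(\R)\subset C(\R)$, and write its open complement as the disjoint union $\bigcup_{i}E_{i}$ of connected components; the preceding step gives $\phi\in C^{\infty}(E_{i})$, while any finite endpoint of $E_{i}$ lies in $C$, so $\phi\to c$ there by continuity. Property $(TW3)$ is then immediate: Lemma $\ref{lema1}$ gives $v^{2}\in C^{1}$, and the displayed identity shows $(v^{2})^{''}=(v^{'})^{2}+p(v)\in L^{1}_{loc}$, since $\phi^{'}\in L^{2}_{loc}$ and $p(v)$ is continuous, so $(\phi-c)^{2}\in W^{2,1}_{loc}(\R)$. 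To extract $(TW2)(i)$ I would use that on each $E_{i}$ the equation holds classically, reading $2(\phi-c)\phi^{''}+(\phi^{'})^{2}=2\phi^{3}-2c\phi-a$; multiplying by $\phi^{'}$ turns the left side into an exact derivative, $\frac{d}{d\xi}\big[(\phi-c)(\phi^{'})^{2}\big]=\frac{d}{d\xi}\big[\tfrac{\phi^{4}}{2}-c\phi^{2}-a\phi\big]$, and one integration produces a constant $d_{i}$ with $(\phi^{'})^{2}=F(\phi)$ exactly as in $(\ref{178})$–$(\ref{179})$ (the signs match after clearing $c-\phi$).

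Condition $(TW2)(ii)$ is where the measure-theoretic lemmas enter, and the converse reverses the same machinery. If $\mu(C)>0$, I would evaluate the equation a.e.\ on $C$: Lemma $\ref{lema2}$ applied to the absolutely continuous $\phi$ and $v^{2}$ gives $\phi^{'}=0$ and $(v^{2})^{'}=0$ a.e.\ on $C$, while Lemma $\ref{lema3}$ applied to $v^{2}\in W^{2,1}_{loc}$ gives $(v^{2})^{''}=0$ a.e.\ on $C$; since $v=0$ there, the identity collapses to $0=p(0)=2c^{3}-2c^{2}-a$ on a set of positive measure, forcing $a=2c^{3}-2c^{2}$. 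For the converse, on $E$ I would differentiate $(\phi-c)(\phi^{'})^{2}=\tfrac{\phi^{4}}{2}-c\phi^{2}-a\phi-d_{i}$ to recover the classical form of $(\ref{171})$, so the equation holds pointwise, hence a.e., on $E$; on $C$ the two $L^{1}_{loc}$ sides equal $2c^{3}-2c^{2}$ and $a$ a.e.\ by Lemmas $\ref{lema2}$–$\ref{lema3}$, and these agree either vacuously when $\mu(C)=0$ or by $(TW2)(ii)$ when $\mu(C)>0$. Because $(TW3)$ makes $((\phi-c)^{2})^{''}$ a genuine $L^{1}_{loc}$ function, equality a.e.\ on $E\cup C=\R$ upgrades to equality in $\mathcal{D}^{'}(\R)$, i.e.\ Definition $\ref{205}$.

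I expect the main obstacle to be the bookkeeping in passing between the distributional equation and the pointwise ODE on each smooth piece: justifying that the integration constant $d_{i}$ is genuinely constant on each component $E_{i}$ (while permitted to differ between components), and then reassembling the separate a.e.\ identities on $E$ and on $C$ into one distributional identity across the interface $\partial E$. The dichotomy driven by $\mu(C)$ is the conceptual crux that makes this reassembly succeed, and it is precisely what distinguishes the generic traveling waves from the stumpon regime where $\phi\equiv c$ on a set of positive measure.
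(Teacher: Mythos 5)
Your proposal is correct and follows essentially the same route as the paper's proof: the substitution $v=\phi-c$ with Lemma \ref{lema1} for smoothness and $(TW1)$, multiplication by $\phi^{'}$ and integration on each $E_{i}$ for $(TW2)(i)$, the $L^{1}_{loc}$ observation for $(TW3)$, Lemmas \ref{lema2}--\ref{lema3} evaluated a.e.\ on $C$ for $(TW2)(ii)$, and the same a.e.-to-distributional reassembly for the converse. The only differences are presentational (you write $p(v)$ explicitly in the variable $v$ and spell out the root-extraction argument $v=\pm(v^{2^{j}})^{1/2^{j}}$ that the paper delegates to the cited lemma), so no further comparison is needed.
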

\begin{proof}
				Applying Lemma \ref{lema1}, with $v=\phi -c$ and $p(v) = 2\phi^{3} -2c\phi -a$, we see that 
								$$(\phi -c)^{k}\in C^{j}(\R)\ \ \text{para}\ \ k\geqslant 2^{j}.$$

			Define $C=\phi^{-1}(c)$. Since $\phi$ is continuous, $C$ is a closed set. Since every open set is a countable union of disjoint open intervals, there are disjoint open intervals $E_{i}$, $i\geqslant 1$, such that $\R\setminus C = \bigcup_{i=1}^{\infty}E_{i}$. By construction, it follows that $(TW1)$ is satisfied.
			
			Now let's prove $(TW2)$. To get this, we consider $E_{i}$ one of these open intervals. Since $\phi$ is $C^{\infty}$ in $E_{i}$, we infer that (\ref{171}) holds pointwise in $E_{i}$. So, multiplying (\ref{171}) by $\phi^{'}$ we obtain
			$$-2c\phi\phi^{'} +2c\phi^{'}\phi^{''} +2\phi^{3}\phi^{'} = 2\phi\phi^{'}\phi^{''} + (\phi^{'})^{3} + a\phi^{'}.$$
Equivalently, we can rewrite the above equation as
						$$-c(\phi^{2})^{'} + c[(\phi^{'})^{2}]^{'} + \left(\frac{\phi^{4}}{2}\right)^{'} = [(\phi^{'})^{2}\phi]^{'} +a\phi^{'}$$
and from an integration with respect to the parameter $\xi$, we deduce
				\begin{equation}
				\label{180}
				(\phi^{'})^{2}(c-\phi) = \phi^{2}\left(c-\frac{\phi^{2}}{2}\right) +a\phi +d_{i},\ \xi\in E_{i}
				\end{equation}
for some constants of integration $d_{i}$. Dividing (\ref{180}) by $c-\phi$ we have $(\ref{178})$. That $\phi\rightarrow c$ at the finite
endpoints of $E_{i}$ it follows from the continuity of $\phi$ and $(TW1)$. This proves $(i)$ of $(TW2)$.

				The left-hand side of $(\ref{171})$ is in $L^{1}_{loc}(\R)$. Hence, $((\phi -c)^{2})^{''}\in L^{1}_{loc}(\R)$ and so follows $(TW3)$.
				
				To show $(ii)$ of $(TW2)$, let us assume $\mu(C)>0$. Since $\phi\in H^{1}_{loc}(\R)$ and $(\phi -c)^{2}\in W^{2,1}_{loc}(\R)$, we obtain from Lemma \ref{lema2} and Lemma \ref{lema3} respectively,
				\begin{equation}
				\label{181}
				\phi^{'}(\xi) = 0\ \ \text{and}\ \ ((\phi -c)^{2})^{''}(\xi) = 0\ \text{a.e. in}\ C.
				\end{equation}
By the fact that $(\phi -c)^{2}\in W^{2,1}_{loc}(\R)$, we have that (\ref{171}) holds a.e. in $\R$, i.e., 
				$$(\phi^{'})^{2} + 2\phi^{3} -2c\phi = ((\phi -c)^{2})^{''} +a\ \ \text{a.e. in}\ \R .$$
In particular, the above equation occurs a.e. in $C$. Then, from (\ref{181}) it follows that
				$$2\phi^{3} -2c\phi = a\ \ \text{a.e. in}\ C.$$
Since $\mu(C)>0$ and $\phi\equiv c$ in $C$, we conclude that $a=2c^{3} -2c^{2}$. This shows that all the traveling waves of equation (\ref{mCH}) satisfy $(TW1)-(TW3)$.

		Reciprocally, note that from the differentiation of (\ref{178}) we have
					\begin{equation}
					\label{182}
					(\phi^{'})^{2} + 2\phi^{3} -2c\phi = ((\phi -c)^{2})^{''} +a\ \ \text{in}\ E\ \dot{=}\ \bigcup_{i=1}^{\infty} E_{i}.
					\end{equation}
If $\mu(C) = 0$, then (\ref{182}) implies that
					\begin{equation}
					\label{183}
					(\phi^{'})^{2} + 2\phi^{3} -2c\phi = ((\phi -c)^{2})^{''} +a\ \ \text{a.e. in}\ \R .
					\end{equation}
Since $((\phi -c)^{2})^{''}\in L^{1}_{loc}(\R)$ by $(TW3)$, (\ref{183}) implies (\ref{171}), from which we conclude that $\phi$ is a traveling wave solution of (\ref{mCH}).
		
			To complete the demonstration it remains to show that (\ref{183}) also occurs in the case where $\mu(C)>0$. Suppose $\mu(C)>0$. Since $\phi\in H^{1}_{loc}(\R)$ and $(\phi -c)^{2}\in W^{2,1}_{loc}(\R)$, we obtain from Lemmas \ref{lema2} and \ref{lema3} that equation (\ref{181}) holds. From $(ii)$ of $(TW2)$ we have $a=2c^{3}-2c^{2}$. So, since $\phi\equiv c$ in $C$, we deduce for a.e. $\xi$ in $C$,
			$$(\phi^{'})^{2} + 2\phi^{3} -2c\phi = ((\phi -c)^{2})^{''} +a.$$
Jointly with (\ref{182}), what we have just shown implies (\ref{183}) and the result follows.
\end{proof}
						
			To prove Theorems \ref{classificacao1} and \ref{classificacao2}, we will show that the set of bounded functions satisfying $(TW1)-(TW3)$ in Lemma \ref{lema4} consists exactly of the waves presented in the statements of these theorems.
						
			Suppose that $\phi$ satisfies $(TW1)-(TW2)$. From $(TW1)$ and $(TW2)$ there is a countable number of $C^{\infty}$ wave segments separated by a closed set $C$ such that each wave segment $\phi$ satisfies
				\begin{eqnarray}
				\label{184}
				&& (\phi^{'})^{2} = F(\phi)\ \text{for}\ \xi\in E,\ \ F(\phi) = \frac{\phi^{2}\left(c-\frac{\phi^{2}}{2}\right) +a\phi +d}{c-\phi}\nonumber\\
				&&\text{and}\ \ \phi\rightarrow c\ \text{at any finite endpoint of}\ E
				\end{eqnarray}
for some interval $E$ and constants $a,d$. If we are able to find all solutions of $(\ref{184})$ for different intervals $E$ and different values of $a$ and $d$, then we can join solutions defined at intervals in which the union is $\R\setminus C$ for some closed set $C$ of null measure. We will have the function that we constructed defined in $\R$, satisfying $ (TW1) $ and $ (TW2) $ if, and only if, all the wave segments satisfy (\ref{184}) with the same constant $a$. In addition, if for $a=2c^{3}-2c^{2}$ we allow $\mu(C)>0$, this procedure will give us all the functions satisfying $(TW1)$ and $(TW2)$. Let us show that these functions we have just constructed belong to $H^{1}_{loc}(\R)$, satisfy $(TW3)$ and are exactly the waves in Theorems \ref{classificacao1} and \ref{classificacao2}.

To study (\ref{184}), we first observe that for general equations of the form
				\begin{equation}
				\label{185}
				(\phi^{'})^{2} = F(\phi),
				\end{equation}
where $F:\R\longrightarrow\R$ is a rational function, Lenells in \cite{Lenells} explored the qualitative behavior of solutions to (\ref{185}) at points where $F$ has a zero or a pole. In resume, if $F(\phi)$ has a simple zero at $\phi = m$, so that $F^{'}(m) >0$, then the solution $\phi$ of (\ref{185}) satisfies
			\begin{equation}
			\label{eq polinomial}
			\phi(\xi) = m+\frac{1}{4}(\xi-\eta)^{2}F^{'}(m) + O((\xi-\eta)^{4})\ \ \text{at}\ \ \xi\uparrow\eta,
			\end{equation}
where $\phi(\eta)=m$. Assuming that $F(\phi)$ has a double zero at $m$, so that $F^{'}(m) = 0$ and $F^{''}(m)>0$, we get
			\begin{equation}
			\label{eq exp}
			\phi(\xi) - m \approx \alpha\textit{exp}(-\xi\sqrt{F^{''}(m)})\ \ \text{as}\ \ \xi\rightarrow \infty
			\end{equation}
for some constant $\alpha$, then $\phi \downarrow m$ exponentially as $\xi\rightarrow \infty$. So, whenever $F$ has two simple zeros $m, M$ and $F(\phi)>0$ for $m<\phi <M$ we have the periodic solutions. If $F$ has a double zero $m$, a simple zero $M$, and $F(\phi)>0$ for $m<\phi <M$, then there are solutions with decay. To $F$ with a simple zero at $m$ and $F(\phi)>0$ for $m<\phi$, then no bounded solution $\phi$ exists.

If we include all the functions $\phi\in H^{1}_{loc}(\R)$ that are solutions of (\ref{185}), we will expand the class of solutions, so that they will have another qualitative behavior. If $\phi$ approaches a simple pole $\phi =c$ of $F$, then, if $\phi(\xi_{0}) = c$,
			\begin{equation}
			\label{funcao com polo}
			\phi(\xi) -c = \alpha |\xi-\xi_{0}|^{\frac{2}{3}} + O((\xi-\xi_{0})^{\frac{4}{3}})\ \ \text{as}\ \ \xi\rightarrow \xi_{0}
			\end{equation}
for some constant $\alpha$, so cusped solutions occur. Also, peakons occur when the evolution of $\phi$ according to (\ref{185}) suddenly changes direction, that is, $\phi^{'}\mapsto -\phi^{'}$.

			Applying the above discussion to the particular case
				\begin{equation}
				\label{188}
				F(\phi) = \frac{P(\phi)}{c-\phi},\ \ \text{with}\ \ P(\phi) = \phi^{2}\left(c-\frac{\phi^{2}}{2}\right) +a\phi +d , 
				\end{equation}
we can classify all the bounded solutions of (\ref{184}). The proof is basically an inspection of all possible distributions of zeros and poles of $F$. Note that $P(\phi)$ is a fourth-degree polynomial with real coefficients, then it has one, two or four real zeros. Moreover,

			To prove Theorem \ref{classificacao3} suppose that $P(\phi)$ has only a real zero. This zero has double multiplicity because the polynomial $P(\phi)$ is fourth-degree. Let $m\in\R$ this double zero and $z,\overline{z}$ the others zeros in $\C\setminus\R$. For this distribution of zeros, we can write
			$$P(\phi) = -\frac{1}{2}(\phi -m)^{2}(\phi -z)(\phi -\overline{z})$$
and so, we obtain $F^{'}(m) = 0$ and
				$$F^{''}(m) = - \frac{|m-z|^{2}}{c-m}.$$
Thus, we can see that the solution $\phi$ of the mCH equation satisfies $(\ref{eq exp})$ to $c<m< \phi$ and, therefore, $\phi$ is not bounded. If $\phi <m <c$, then $- F^{''}(m) >0 $ and a similar result is obtained.

			Suppose that there are two simple real zeros, say $m$ and $M$, and we going to prove Theorem \ref{classificacao1}. We write for $z\in\C\setminus\R$
					$$P(\phi) = \frac{1}{2}(M-\phi)(\phi -m)(\phi -z)(\phi -\overline{z})$$ 
and comparing the coefficients of (\ref{188}) we obtain that $\Re(z)= -(M+m)/2$ with $z\in\C\setminus\R$ satisfying the hyperboloid of two sheet in $(m, M, \Im(z))$,
				\begin{equation}
				\label{hiperboloide}
				\frac{5}{4}m^{2} +\frac{5}{4}M^{2} -\Im(z)^{2} + \frac{3}{2}mM -2c = 0.
				\end{equation}
In addition, we have
						$$F^{'}(m) = \frac{1}{2}\cdot\frac{(M-m)|m-z|^{2}}{c-m}$$
and, from (\ref{eq polinomial}), we deduce from that there are periodic solutions $C^{\infty}$ for (\ref{184}) if and only if $m<M<c$ or $c<M<m$. Checking the different cases it is shown that these are all the $C^{\infty}$ bounded solutions of (\ref{184}) for $P(\phi)$ with two simple real zeros. Note that these solutions never touch the line $\phi =c$. Thus, the interval $E$ in (\ref{184}) must be the whole real line. In particular, these $C^{\infty}$ solutions can never be glued together with another wave.

For the case of bounded solutions to (\ref{184}) for which $\phi\rightarrow c$ at a finite end-point of $E$, gluing two of these together will give rise to a non-smooth wave. Periodic peakons occur if we leave $m<M=c$. In fact, consider $\phi$ a solution such that $m\leqslant\phi\leqslant M$. If $\phi$ is decreasing it will reach $m$ and turn back up to reaches $M$. $\phi$ can not stop or turn back anywhere because $\phi$ has not singularities at points where $\phi \neq c$. But note that $\phi$ increases until reaches $\phi = c$ and it can make a sudden turn at this point. This suddenly change of direction of $\phi$ makes $\phi^{'}$ have a jump which gives rise to a peak. To see how periodic cuspons happen, consider $m< c <M$. In this case, $F(\phi)$ has a simple zero at $m$, a simple pole at $c$ and $F(\phi)>0$ for $m<\phi<c$. Thus, from (\ref{funcao com polo}) we have
					$$\phi^{'}(\xi) = \left\{\begin{array}{ll}
																	\frac{2}{3}\alpha |\xi-\xi_{0}|^{-\frac{1}{3}} + O((\xi-\xi_{0})^{\frac{1}{3}})\ \ \text{as}\ \ \xi\downarrow \xi_{0}&\\
																	-\frac{2}{3}\alpha |\xi-\xi_{0}|^{-\frac{1}{3}} + O((\xi-\xi_{0})^{\frac{1}{3}})\ \ \text{as}\ \ \xi\uparrow \xi_{0},&
					\end{array}\right.$$
for some constant $\alpha$. In particular, the solution $\phi$ has a cusp.

			These are all nontrivial bounded solutions of (\ref{184}) for this distribution of zeros of the polynomial $P(\phi)$ and correspond to items $(a)-(c)$ and $(a^{'})-(c^{'})$ of Theorem \ref{classificacao1}.

			Finally, let's determine which solutions of (\ref{184}) have the same constant $a$ to be glued into a function satisfying $(TW3)$.	We can find the integration constant $a$, corresponding to a solution $\phi$, identifying the coefficients of $\phi$ in $(\ref{188})$. Recalling that $\Re(z)=-(M+m)/2$, we obtain
								$$2a = (M+m)(|z|^{2} - mM),$$
where $z\in\C\setminus\R$ satisfies the equation $(\ref{hiperboloide})$. Moreover, the quadric (\ref{hiperboloide}) can be rewritten, less than a change of variables, in its reduced form
							$$\frac{m^{2}}{\alpha^{2}} - \frac{M^{2}}{\beta^{2}} - \frac{\Im(z)^{2}}{\gamma^{2}} = 1,$$
with $\alpha = \sqrt{-2c}$, $\beta = 2\sqrt{-c}$ and $\gamma = \sqrt{-c}$, for $c< 0$.		
			
			Thus, we obtain all bounded functions satisfying $(TW1)$ and $(TW2)$ for $P(\phi)$ with two simple real zeros by gluing of all possible ways the solutions corresponding to the same hyperboloid of two sheet. Note that the waves resulting of this approach are exactly the waves of Theorem \ref{classificacao1}.
			
			To prove Theorem \ref{classificacao2}, assume that there are four real zeros, say $m$, $M$, $z$, and $r$, and write 
					$$P(\phi) = \frac{1}{2}(M-\phi)(\phi -m)(\phi -z)(\phi -r).$$
Analogously to the previous case we obtain $z= -m-r-M$ with $r$ satisfying the ellipsoid in $(m, M, r)$,
				\begin{equation}
				\label{elipsoide}
				r^{2} + m^{2} + M^{2} +rm +rM + mM -2c = 0,
				\end{equation}
					$$F^{'}(m)=\frac{1}{2}\cdot\frac{(M-m)(m-z)(m-r)}{c-m}$$
and 
					$$F^{''}(m) = \frac{(M-m)[(m-r) + (m-z)] - (m-z)(m-r)}{c-m} + \frac{(M-m)(m-z)(m-r)}{(c-m)^2}.$$
Also, checking the different cases we conclude that there are periodic solutions $C^{\infty}$ for (\ref{184}) if and only if $z<r<m<M<c$ or $c<M<m<r<z$. A bounded solution $\phi$ with $\phi\downarrow m$ when $\xi\rightarrow\pm\infty$, exists for $z<r=m<M<c$. A solution $\phi\uparrow m$ when $\xi\rightarrow\pm\infty$, exists for $c<M<m=r<z$. A solution $\phi$ where both above cases occur, that is, $\phi\uparrow M$ when $\xi\rightarrow +\infty$ and $\phi\downarrow m$ when $\xi\rightarrow -\infty$, happens for $z=m<M=r<c$ or to $c<r=M<m=z$ with $\phi\downarrow M$ and $\phi\uparrow m$ when $\xi\rightarrow \pm\infty$.

Moreover, for bounded solutions to (\ref{184}) for which $\phi\rightarrow c$ at a finite end-point of $E$ we have, for example, periodic peakons if $z<r<m<M=c$ and cuspons with decay if we consider $z<r=m< c <M$. In general, the different possibilities for bounded solutions of (\ref{184}) for $P(\phi)$ with four real zeros are those presented in $(a)-(g)$ and $(a^{'})-(g^{'})$ of Theorem \ref{classificacao2}.

Again to determine which solutions of (\ref{184}) have the same integration constant $a$ we identify the coefficients of $\phi$ in (\ref{188}) obtaining
			$$2a = -(M+m)r^{2} - (m+r)M^{2} - (M+r)m^{2} -2mrM$$
where $z=-m-r-M $ and $r\in\R$ satisfies equation $(\ref{elipsoide})$. So, (\ref{elipsoide}) can be rewrite in its reduced form
							$$\frac{m^{2}}{\alpha^{2}} + \frac{M^{2}}{\beta^{2}} + \frac{r^{2}}{\gamma^{2}} = 1,$$
with $\alpha =\beta = 2\sqrt{c}$ and $\gamma = \sqrt{c}$, for $c> 0$. Gluing of all possible ways the solutions corresponding to the same ellipsoid we obtain that all bounded functions satisfying $(TW1)$ and $(TW2)$ for this case are the waves of Theorem \ref{classificacao2}.
			
			To finalize the proof of Theorems \ref{classificacao1} and \ref{classificacao2} is enough to show that these waves belong to $H^{1}_{loc}(\R)$ and satisfy $(TW3)$. This fact is ensured by Lemma \ref{lema importante} but in order to prove this lemma we need to establish some technical results and notation.
						
			\begin{definition}
A function $f:I\longrightarrow\R$, $I=[a,b]\subset\R$, is said to be of bounded variation if
						$$\sup\sum_{i=1}^{N}|f(t_{i})-f(t_{i-1})|<+\infty ,$$
where the supremum is taken over all natural number $N$ and all choices of partitions $\{t_{i}\}_{i=1}^{N}$ such that $a=t_{0}< t_{1}< \cdot\cdot\cdot < t_{N} = b$. $BV(I)$ denote the set of functions $f:I\longrightarrow\R$ of bounded variation on $I$. $BV_{loc}(\R)$ denote the space of functions $f:\R\longrightarrow\R$ of bounded variation for all compact intervals $I\subset\R$.
\end{definition}

\begin{definition}
A function $f:X\longrightarrow\R$ defined on some set $X\subset\R$ is an $N$-function if $f$ maps sets of measure zero to sets of measure zero.
\end{definition}

\begin{lemma}
\label{lema0}
A function $f:I\longrightarrow\R$, $I=[a,b]\subset\R$, is absolutely continuous if and only if $f\in W^{1,1}_{loc}(I)$.
\end{lemma}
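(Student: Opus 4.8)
The plan is to establish the two implications separately, relying on the classical structure theory of absolutely continuous functions and the characterization of weak derivatives. Since $I=[a,b]$ is compact, I would first observe that $W^{1,1}_{loc}(I)$ coincides with $W^{1,1}(I)$, so that it suffices to work with the latter space throughout.

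For the implication $W^{1,1}(I)\Rightarrow$ absolutely continuous, I would let $g\in L^{1}(I)$ denote the weak derivative of $f$ and introduce the indefinite integral $h(x)=\int_{a}^{x} g(t)\,dt$. The absolute continuity of the Lebesgue integral shows directly that $h$ is absolutely continuous: given $\epsilon>0$, the integrability of $|g|$ provides a $\delta>0$ such that $\int_{E}|g|<\epsilon$ whenever $\mu(E)<\delta$, which is exactly the required estimate on finite unions of disjoint intervals. An integration by parts against test functions, equivalently Fubini's theorem applied to $\int_{a}^{x}g$, shows that the weak derivative of $h$ is again $g$. Consequently $w:=f-h$ has vanishing weak derivative, and the du Bois-Reymond lemma forces $w$ to agree almost everywhere with a constant. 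Hence $f=h+\text{const}$ a.e., and since $h$ is absolutely continuous, $f$ admits an absolutely continuous representative.

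For the converse, I would invoke the structure theorem for absolutely continuous functions: every such $f$ is of bounded variation, is differentiable almost everywhere with $f^{'}\in L^{1}(I)$, and satisfies the fundamental theorem of calculus $f(x)=f(a)+\int_{a}^{x}f^{'}(t)\,dt$. It then remains only to verify that this pointwise a.e.\ derivative serves as the weak derivative. For $\psi\in C_{0}^{\infty}(I)$ I would establish the identity $\int_{I}f\psi^{'}\,dx=-\int_{I}f^{'}\psi\,dx$; writing $f(x)=f(a)+\int_{a}^{x}f^{'}$ and applying Fubini to interchange the order of integration in $\int_{I}\big(\int_{a}^{x}f^{'}(t)\,dt\big)\psi^{'}(x)\,dx$ yields this formula, the boundary contributions vanishing because $\psi$ has compact support in the open interval. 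This identity is precisely the assertion that $f^{'}\in L^{1}(I)$ is the weak derivative of $f$, whence $f\in W^{1,1}(I)$.

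The main obstacle is the careful justification of the du Bois-Reymond step, namely that a locally integrable function whose distributional derivative vanishes must be almost everywhere constant. The cleanest route is to fix a test function $\rho$ with $\int_{I}\rho=1$ and, given an arbitrary $\eta\in C_{0}^{\infty}(I)$, decompose $\eta=\big(\int_{I}\eta\big)\rho+\eta_{0}$, where $\eta_{0}$ has mean zero and is therefore the derivative $\psi^{'}$ of some test function $\psi$; testing $w$ against $\eta$ then shows $\int_{I}(w-\int_{I}w\rho)\,\eta\,dx=0$ for all $\eta$, forcing $w$ to be constant. Everything else in the argument reduces to the absolute continuity of the Lebesgue integral and the Lebesgue differentiation theorem, both of which are standard.
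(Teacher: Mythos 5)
Your proof is correct; the catch is that there is no argument in the paper to compare it with --- Lemma \ref{lema0} is dispatched, together with Lemmas \ref{lema6} and \ref{lema7}, by a single citation to Rudin's \emph{Real and Complex Analysis}. Your write-up is the standard self-contained proof and it is sound: the direction $W^{1,1}\Rightarrow$ absolutely continuous combines the absolute continuity of the Lebesgue integral, the Fubini computation showing that $h(x)=\int_a^x g(t)\,dt$ has weak derivative $g$, and the du Bois--Reymond constancy lemma, for which your decomposition $\eta=\bigl(\int_I\eta\bigr)\rho+\psi'$ is exactly the right device; the converse correctly reduces, via Fubini again, to the Lebesgue--Vitali structure theorem (an absolutely continuous $f$ is differentiable a.e., $f'\in L^1(I)$, and the fundamental theorem of calculus holds), which is the one genuinely deep ingredient and is precisely what the cited chapter of Rudin establishes, so invoking it as standard is legitimate. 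Two remarks. First, your opening reduction $W^{1,1}_{loc}(I)=W^{1,1}(I)$ is not a cosmetic normalization but is needed for the lemma to be true as stated: if ``loc'' were read relative to the open interval $(a,b)$, the statement would fail --- for instance $f(x)=\log(x-a)$ lies in $W^{1,1}_{loc}((a,b))$ yet is unbounded, hence not absolutely continuous, on $[a,b]$; reading neighborhoods in the compact space $[a,b]$, as you do, is the only interpretation under which the equivalence holds, and this deserves to be said explicitly. Second, the forward implication yields that $f$ agrees a.e.\ with an absolutely continuous function, which is the correct formulation of the conclusion since elements of $W^{1,1}$ are equivalence classes; you note this, and it should be kept.
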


\begin{lemma}
\label{lema6}
Let $f:I\longrightarrow\R$, $I=[a,b]\subset\R$, be a continuous function of bounded variation. Then, $f$ is absolutely continuous if and only if $f$ is an $N$-function.
\end{lemma}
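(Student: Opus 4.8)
The plan is to identify the statement as the Banach--Zarecki theorem and to prove its two implications separately, the forward one being elementary and the reverse one carrying the analytic content. Suppose first that $f$ is absolutely continuous and let $E\subset I$ satisfy $\mu(E)=0$; I must show $\mu(f(E))=0$. Fix $\varepsilon>0$, take the corresponding $\delta>0$ from the definition of absolute continuity, enclose $E$ in an open set $U$ with $\mu(U)<\delta$, and write $U=\bigcup_{i}(a_{i},b_{i})$ as a countable disjoint union of open intervals. By continuity $f$ attains its maximum and minimum on each $[a_{i},b_{i}]$, say at $c_{i}$ and $d_{i}$, so $f([a_{i},b_{i}])$ is an interval of length $|f(c_{i})-f(d_{i})|$. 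Since the $[c_{i},d_{i}]$ are pairwise disjoint with $\sum_{i}|d_{i}-c_{i}|\le\sum_{i}(b_{i}-a_{i})<\delta$, absolute continuity (passed from finite to countable disjoint families by a limit) gives $\sum_{i}|f(c_{i})-f(d_{i})|\le\varepsilon$, whence the Lebesgue outer measure $\mu^{*}$ satisfies
\[
\mu^{*}(f(E))\le\mu^{*}(f(U))\le\sum_{i}\mu^{*}\big(f([a_{i},b_{i}])\big)=\sum_{i}|f(c_{i})-f(d_{i})|\le\varepsilon .
\]
Letting $\varepsilon\downarrow0$ shows that $f$ is an $N$-function.

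For the converse, assume $f$ is continuous, of bounded variation, and an $N$-function. As $f\in BV(I)$, it is differentiable almost everywhere and $f'\in L^{1}(I)$; set $A=\{x\in I:f'(x)\ \text{exists and is finite}\}$, so $\mu(I\setminus A)=0$. The crux is the image--measure estimate
\begin{equation*}
\mu^{*}(f(E))\le\int_{E}|f'(x)|\,dx \qquad (E\subseteq A\ \text{measurable}).
\tag{$\ast$}
\end{equation*}
Granting $(\ast)$, for an arbitrary measurable $E\subseteq I$ I split $E=(E\cap A)\cup(E\setminus A)$: the first part is controlled by $(\ast)$ and the second has null image by the $N$-property, so $\mu^{*}(f(E))\le\int_{E}|f'|\,dx$. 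Absolute continuity now follows, for since $f'\in L^{1}$ one may choose $\delta>0$ with $\int_{E}|f'|<\varepsilon$ whenever $\mu(E)<\delta$; then for any finite disjoint family $\{(a_{i},b_{i})\}$ with $\sum_{i}(b_{i}-a_{i})<\delta$, using continuity as above in the form $|f(b_{i})-f(a_{i})|\le\mu^{*}(f((a_{i},b_{i})))$ and applying the previous bound to each $(a_{i},b_{i})$,
\[
\sum_{i}|f(b_{i})-f(a_{i})|\le\sum_{i}\mu^{*}\big(f((a_{i},b_{i}))\big)\le\sum_{i}\int_{(a_{i},b_{i})}|f'|=\int_{\bigcup_{i}(a_{i},b_{i})}|f'|<\varepsilon ,
\]
which is exactly the absolute continuity of $f$.

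The main obstacle is the estimate $(\ast)$, which uses neither continuity nor bounded variation, only pointwise differentiability on $E$, and which I would prove by a Vitali covering argument with a stratification of $E$ by the size of $f'$. Fixing $\varepsilon>0$, write $E=\bigsqcup_{k\ge0}E_{k}$ with $E_{k}=\{x\in E:k\varepsilon\le|f'(x)|<(k+1)\varepsilon\}$. For each $x\in E_{k}$ differentiability furnishes arbitrarily short closed intervals $J$ centred at $x$ with $\operatorname{diam}f(J)\le(|f'(x)|+\varepsilon)|J|\le(k+2)\varepsilon\,|J|$, and these form a Vitali cover of $E_{k}$; extracting a countable disjoint subfamily $\{J_{\ell}\}$ that exhausts $E_{k}$ up to a null set and lies in an open set of measure $<\mu(E_{k})+\varepsilon2^{-k}$, the $N$-property kills the residual null set and yields $\mu^{*}(f(E_{k}))\le(k+2)\varepsilon\big(\mu(E_{k})+\varepsilon2^{-k}\big)$. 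Since $|f'|\ge k\varepsilon$ on $E_{k}$ gives $k\varepsilon\,\mu(E_{k})\le\int_{E_{k}}|f'|$, summing over $k$ produces $\mu^{*}(f(E))\le\int_{E}|f'|+C\varepsilon$ with $C$ depending only on $\mu(I)$, and letting $\varepsilon\downarrow0$ establishes $(\ast)$. As every ingredient here is classical, an alternative acceptable route is to invoke a standard reference for the Banach--Zarecki theorem; I expect the Vitali estimate to be the only genuinely delicate point.
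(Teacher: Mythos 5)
Your proof is correct: you have correctly identified the lemma as the Banach--Zarecki theorem and both implications are sound --- the forward direction via enclosing the null set in a small open union of intervals and using continuity to control the diameters of the image intervals, and the converse via the image-measure estimate $\mu^{*}(f(E))\le\int_{E}|f'|\,dx$ proved by stratifying $E$ according to the size of $|f'|$ and applying the Vitali covering theorem, with the property $N$ absorbing the exceptional null sets. Note, however, that the paper itself does not prove this lemma at all: it is stated together with Lemmas \ref{lema0} and \ref{lema7}, and all three are dispatched with a single citation to Rudin's \emph{Real and Complex Analysis} \cite{Rudin}. So your route is genuinely different in the sense that it supplies the analytic content the paper outsources. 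What your version buys is self-containedness and a reusable quantitative estimate ($\mu^{*}(f(E))\le\int_{E}|f'|$ holds for any measurable $E$ once property $N$ is assumed); it also quietly repairs a small gap in the citation, since Rudin's Theorem 7.18 states the equivalence for continuous \emph{nondecreasing} functions, and passing from there to the bounded-variation case requires knowing that the total variation function of a continuous $BV$ function with property $N$ again has property $N$ --- a step your direct argument never needs. What the paper's approach buys is brevity: the result is classical and peripheral to the main theorems on traveling waves, so a reference suffices for its purposes.
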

		
\begin{lemma}
\label{lema7}
If $f:I\longrightarrow\R$, $I=[a,b]\subset\R$, is continuous and $|f|$ is absolutely continuous, the $f$ is absolutely continuous.
\end{lemma}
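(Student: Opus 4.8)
The plan is to apply the Banach--Zarecki-type criterion recorded in Lemma \ref{lema6}: a continuous function of bounded variation on $[a,b]$ is absolutely continuous precisely when it is an $N$-function. Since $f$ is continuous by hypothesis, it suffices to verify two further things, namely that $f\in BV([a,b])$ and that $f$ is an $N$-function. Before attacking these, I would first extract the corresponding properties of $|f|$. As $|f|$ is absolutely continuous it is in particular continuous and of bounded variation, so Lemma \ref{lema6} applies to $|f|$ and shows that $|f|$ is itself an $N$-function. These two facts about $|f|$, its bounded variation and its $N$-property, are exactly what I aim to transfer to $f$.

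For the bounded variation of $f$, I would compare the variation sums of $f$ and $|f|$. Fix a partition $a=t_0<\cdots<t_N=b$. Whenever two consecutive nodes satisfy $f(t_{i-1})f(t_i)<0$, continuity of $f$ together with the intermediate value theorem furnishes a point $s_i\in(t_{i-1},t_i)$ with $f(s_i)=0$; I insert all such $s_i$ to obtain a refinement $P'$. By the triangle inequality a refinement only increases the variation sum, so $\sum_P|f(t_i)-f(t_{i-1})|\le\sum_{P'}|f(t_i)-f(t_{i-1})|$. On $P'$ every pair of consecutive nodes now carries $f$-values of the same sign or with at least one value equal to zero, whence $|f(t_i)-f(t_{i-1})|=\big||f(t_i)|-|f(t_{i-1})|\big|$ term by term. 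Summing and using that $|f|$ is of bounded variation yields $\sum_P|f(t_i)-f(t_{i-1})|\le V(|f|;[a,b])<\infty$, and taking the supremum over all partitions $P$ gives $f\in BV([a,b])$.

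To see that $f$ is an $N$-function, let $A\subset[a,b]$ have Lebesgue measure zero. The key observation is that the image $f(A)$ is controlled by the image $|f|(A)=\{|f(x)|:x\in A\}$ through the absolute-value map: indeed $f(A)\subseteq|f|(A)\cup\big(-|f|(A)\big)$, since every $y\in f(A)$ satisfies $|y|\in|f|(A)$. Because $|f|$ is an $N$-function, $|f|(A)$ is a null set; its reflection $-|f|(A)$ is then also null, so their union is null and $f(A)$, being a subset of a null set, has measure zero. Hence $f$ maps sets of measure zero to sets of measure zero.

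With $f$ continuous, of bounded variation, and an $N$-function, Lemma \ref{lema6} yields that $f$ is absolutely continuous, completing the argument. The only genuinely delicate point is the bounded-variation step: one must refine the partition exactly at the sign changes of $f$ so that, on each refined subinterval, $|f(t_i)-f(t_{i-1})|$ collapses to $\big||f(t_i)|-|f(t_{i-1})|\big|$. What makes this legitimate is that the variation sum depends only on the nodal values, so the behaviour of $f$ strictly between consecutive nodes is irrelevant.
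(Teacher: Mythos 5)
Your proof is correct. Strictly speaking there is no in-paper argument to compare against: the paper dispatches Lemmas \ref{lema0}--\ref{lema7} with a single citation to \cite{Rudin}, so your write-up supplies a self-contained proof where the paper gives none. Your route---reduce everything to the Banach--Zarecki criterion of Lemma \ref{lema6} by transferring the two needed properties from $|f|$ to $f$---is sound at every step. The bounded-variation transfer works exactly as you say: after inserting, via the intermediate value theorem, a zero of $f$ between any consecutive nodes where $f$ changes sign, every consecutive pair of nodes carries values of equal sign or a zero, and in either case $|f(t_i)-f(t_{i-1})|=\bigl||f(t_i)|-|f(t_{i-1})|\bigr|$, so the refined sum is dominated by the total variation of $|f|$; since refinement only increases variation sums, $f\in BV([a,b])$. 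The Luzin $N$-property transfer via the inclusion $f(A)\subseteq |f|(A)\cup\bigl(-|f|(A)\bigr)$ is likewise airtight, using completeness of Lebesgue measure and the fact that a reflection of a null set is null. One small streamlining is available: absolute continuity of $|f|$ yields the $N$-property of $|f|$ directly (an AC function maps null sets to null sets, by the $\sum|f(b_i)-f(a_i)|<\varepsilon$ covering estimate), so you only need Lemma \ref{lema6} once, in the concluding direction; invoking it twice, as you do, is nevertheless perfectly legitimate given the toolkit the paper provides.
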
			
		
		The proof of Lemmas \ref{lema0}$-$\ref{lema7} can be found on book \cite{Rudin}.
		
\begin{lemma}
\label{lema importante}
Any bounded function $\phi$ satisfying (TW1) and (TW2) belongs to $H^{1}_{\text{loc}}(\R)$ and satisfies (TW3).
\end{lemma}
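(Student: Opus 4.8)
The plan is to reduce both assertions to a single integrability estimate and then upgrade continuity to absolute continuity using the measure-theoretic Lemmas \ref{lema0}, \ref{lema6} and \ref{lema7}. Write $w=(\phi-c)^{2}$. By Lemma \ref{lema1} (applied with $v=\phi-c$) we already know $w\in C^{1}(\R)$, so $w$ is continuous, $w\geqslant 0$, $w\equiv 0$ on $C$, and since every point of $C$ is a global minimum of $w$ we get $w'(\xi)=0$ for \emph{every} $\xi\in C$. On each $E_{i}$ the function $\phi$ is smooth and satisfies $(\phi')^{2}=F(\phi)$, so differentiating gives $\phi''=\tfrac12 F'(\phi)$ there; substituting into $w''=2(\phi')^{2}+2(\phi-c)\phi''$ and using $(\phi-c)F(\phi)=-P(\phi)$ from (\ref{188}), a short computation collapses the expression to
\[ w''=F(\phi)-P'(\phi)\quad\text{on }E. \]
Thus the two objects I must control, $\phi'$ (for $\phi\in H^{1}_{loc}$) and $w''$ (for (TW3)), are both governed by $F(\phi)=(\phi')^{2}$, and everything hinges on the single estimate $F(\phi)\in L^{1}_{loc}(\R)$, equivalently $\phi'\in L^{2}_{loc}(\R)$.

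I would establish this estimate next. Near simple or double zeros of $P$ (the smooth and exponentially decaying segments) the expansions (\ref{eq polinomial})--(\ref{eq exp}) show that $\phi'$ is bounded, so $F(\phi)$ is locally bounded there. The only singularities occur where $\phi$ meets the line $\phi=c$, that is at the simple pole of $F$; there the cusp expansion (\ref{funcao com polo}) gives $\phi'=O(|\xi-\xi_{0}|^{-1/3})$, whence $F(\phi)=(\phi')^{2}=O(|\xi-\xi_{0}|^{-2/3})$, which is integrable since $-2/3>-1$. Over a single segment $E_{i}$ the substitution $d\xi=d\phi/\sqrt{F(\phi)}$ turns $\int_{E_{i}}(\phi')^{2}\,d\xi$ into $\int\sqrt{F(\phi)}\,d\phi$ between $c$ and the relevant extremum, which is finite by the same $(c-\phi)^{-1/2}$ integrability. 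The delicate point, and what I expect to be the main obstacle, is summing these contributions over the countably many segments $E_{i}$ that may accumulate in a fixed compact $[p,q]$ for the composite and stumpon solutions: here one exploits that all glued segments share the same integration constant $a$ (they lie on the same hyperboloid (\ref{304}), respectively ellipsoid (\ref{204})), and, when $\mu(C)>0$, the compatibility $a=2c^{3}-2c^{2}$ from $(TW2)(ii)$, which forces $w''=F-P'$ to vanish consistently on the positive-measure part of $C$ and prevents mass from concentrating there. Granting this, $\int_{[p,q]}F(\phi)\,d\xi<\infty$.

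Finally I would upgrade continuity to absolute continuity and conclude. With the estimate in hand $w''=F(\phi)-P'(\phi)\in L^{1}_{loc}$, since $P'$ is bounded ($\phi$ is bounded). The function $w'$ is continuous, vanishes on $C$ and is $C^{1}$ on each $E_{i}$, so one checks that its total variation over $[p,q]$ equals $\sum_{i}\int_{E_{i}\cap[p,q]}|w''|=\int_{[p,q]}|w''|<\infty$; hence $w'\in BV_{loc}$. Moreover $w'$ is an $N$-function: on each open interval $E_{i}$ it is smooth, hence maps null sets to null sets, while $w'(C)\subseteq\{0\}$ is null. By Lemma \ref{lema6} the continuous $BV$ $N$-function $w'$ is absolutely continuous, so by Lemma \ref{lema0} $w'\in W^{1,1}_{loc}$ and therefore $w=(\phi-c)^{2}\in W^{2,1}_{loc}(\R)$, which is (TW3). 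The same scheme applied to $\phi$ itself (continuous; $BV_{loc}$ because $\int_{[p,q]}|\phi'|\leqslant |p,q|^{1/2}(\int_{[p,q]}F(\phi))^{1/2}<\infty$; an $N$-function because it is smooth on each $E_{i}$ and constant equal to $c$ on $C$) yields, via Lemma \ref{lema6} or equivalently Lemma \ref{lema7} applied to $|\phi-c|=\sqrt{w}$, that $\phi$ is absolutely continuous; combined with $\phi'\in L^{2}_{loc}$ and the boundedness of $\phi$, this gives $\phi\in H^{1}_{loc}(\R)$ and completes the proof.
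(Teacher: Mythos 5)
Your overall scheme (continuity + bounded variation + $N$-function $\Rightarrow$ absolute continuity via Lemma \ref{lema6}, applied first to $\phi$ and then to the derivative of $(\phi-c)^{2}$) is the same as the paper's, and your identity $w''=F(\phi)-P'(\phi)$ on $E$ is correct. But there are two genuine gaps. First, your opening step is circular: Lemma \ref{lema1} requires that $v=\phi-c$ lie in $H^{1}_{loc}(\R)$ \emph{and} satisfy the distributional equation (\ref{171}), and at this point of the argument neither hypothesis is available --- here $\phi$ is merely a function glued together from countably many local smooth solutions so as to satisfy $(TW1)$--$(TW2)$, and membership in $H^{1}_{loc}(\R)$ together with $(TW3)$ (hence, via Lemma \ref{lema4}, the distributional equation) is exactly what this lemma is supposed to deliver. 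Consequently you may not assert $w\in C^{1}(\R)$, nor that $w'$ is continuous and vanishes on all of $C$; continuity of $w$, of $w'$, and even of $\phi$ itself at points where infinitely many intervals $E_{i}$ accumulate is part of what must be proved.

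Second, and more seriously, the estimate on which your whole reduction rests --- $\int_{[p,q]}F(\phi)\,d\xi<\infty$ when the $E_{i}$ accumulate --- is precisely the point you concede (``Granting this''). The per-segment cusp computation $(\phi')^{2}=O(|\xi-\xi_{0}|^{-2/3})$ gives a finite contribution from each $E_{j}$ but says nothing about convergence of the sum over $j$, and the remark that the shared constant $a$ (or $a=2c^{3}-2c^{2}$ when $\mu(C)>0$) ``prevents mass from concentrating'' is not an estimate. The paper closes exactly this hole quantitatively: since $P_{j}(c)\geqslant 0$, the derivative $P_{j}'$ is independent of $j$, and the constants $d_{j}$ form a bounded set, one obtains $0\leqslant P_{j}(\phi)\leqslant D(c-m_{j})$ for $m_{j}\leqslant\phi\leqslant c$ uniformly in $j$ (the paper's (\ref{equacao 31})); feeding this into $|E_{j}|=2\int_{m_{j}}^{c}\sqrt{c-\phi}\,/\sqrt{P_{j}(\phi)}\,d\phi$ yields $|E_{j}|\geqslant D_{0}(c-m_{j})$ (the paper's (\ref{equacao 32})). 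These two inequalities convert ``depth'' into ``length'': $\sum_{j}(c-m_{j})\leqslant\frac{1}{D_{0}}\sum_{j}|E_{j}|\leqslant\frac{1}{D_{0}}|I|<\infty$, $\int_{E_{j}}(\phi')^{2}\,d\xi\leqslant 2\sqrt{D}(c-m_{j})\leqslant\frac{3}{2}D|E_{j}|$, and $\sup_{E_{j}}|(\phi-c)\phi'|\leqslant\sqrt{D}(c-m_{j})\leqslant\frac{2}{3}|E_{j}|$ (up to the constants in the paper), which is what makes $\phi$ continuous and of bounded variation, $\phi'\in L^{2}_{loc}(\R)$, and $(\phi-c)\phi'$ continuous and of bounded variation across accumulation points. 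Without some version of these uniform bounds your proof does not go through; with them, your variant (bounding $w''=F(\phi)-P'(\phi)$ in $L^{1}_{loc}$ instead of showing, as the paper does, that $|(\phi-c)\phi'|=\sqrt{P_{j}(\phi)}\sqrt{c-\phi}$ is absolutely continuous) would be a legitimate alternative ending.
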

\begin{proof}
			Consider $E_{i}$ and $C$ as in $(TW1)$. We initially prove that $\phi$ is an $N$-function. To get this, we write
						$$\phi(N) = \phi(N\cap C) \cup \left(\bigcup_{i=1}^{+\infty} \phi(N\cap E_{i})\right)$$
for any set $N$ of null measure and then we use Lemma \ref{lema6} for $\phi$ restricted to $N\cap E_{i}$ and also observe that $\phi(N\cap C) = \{c\}$.

			Now, we will show that $\phi\in BV_{\text{loc}}(\R)$. Consider $\{i_{k}\}_{k=1}^{+\infty}$, or simply $\{j\}_{j=1}^{+\infty}$, a subsequence and $I$ a compact subinterval such that
			$$I\subset C \cup \left(\bigcup_{j=1}^{+\infty} E_{j}\right)\ \ \text{and}\ \ \sum_{j=1}^{+\infty}|E_{j}|\ <+\infty,$$
where $|E_{j}|$ denote the length of the interval $E_{j}$.

			Suppose, for example, the polynomial $P_{j}(\phi) = \phi^{2}(c-\phi^{2}/2) +a\phi + d_{j}$ with four real zeros, say $z_{j}, r_{j}, m_{j}$ and $M_{j}$, and the case $z_{j} < r_{j} \leqslant m_{j}\leqslant \phi < c \leqslant M_{j}$. From $(\ref{178})$, we see that
			\begin{eqnarray}
			\label{equacao 30}
			|E_{j}| = 2\dis\int_{m_{j}}^{c} \frac{\sqrt{c-\phi}}{\sqrt{\Big{|}\phi^{2}\left(c-\frac{\phi^{2}}{2}\right) +a\phi + d_{j}\Big{|}}}\ d\phi.
			\end{eqnarray}
			
			Note that $P_{j}(c)\geqslant 0$ for all $j$. Moreover, the derivative of $P_{j}$ is independent of $j$ and since $P_{j}$ has four zeros we have that the $b_{j}$'s form a bounded set. Therefore, we obtain
						\begin{equation}
						\label{equacao 31}
						0\leqslant P_{j}(\phi) \leqslant D(c-m_{j}),\ \ m_{j}\leqslant \phi\leqslant c,\ j\geqslant 1,
						\end{equation}
for some positive real constant $D$ independent of $j$.

		So, using $(\ref{equacao 30})$ and $(\ref{equacao 31})$ it follows that
								\begin{equation}
								\label{equacao 32}
								|E_{j}| \geqslant D_{0}(c-m_{j}),\ \ j\geqslant 1,
								\end{equation}
where $D_{0}=4/3\sqrt{D}$. Thus, $\phi\in BV_{\text{loc}}(\R)$ because the total variation of $\phi$ in $I$ is bounded above by
							$$2\sum_{j=1}^{+\infty}\left( c- \min_{\xi\in E_{j}}\phi(\xi)\right) = 2\sum_{j=1}^{+\infty} (c-m_{j}) \leqslant \frac{2}{D_{0}} \sum_{j=1}^{+\infty}|E_{j}|\ <+\infty .$$
							
		Moreover, $\phi$ is continuous on $I$ (see \cite{Lenells}, page 417) and so $\phi$ is continuous in $\R$. Hence, since $\phi$ is an $N$-function of bounded variation, we have by Lemma \ref{lema6} that $\phi$ is absolutely continuous. This means by Lemma \ref{lema0} that the distributional derivative of $\phi$ is in $L^{1}_{\text{loc}}(\R)$, $\phi$ is differentiable in the classical sense at almost all points and the two derivatives coincide. 
		
		To conclude that $\phi \in H^{1}_{loc}(\R)$, we will show that $\phi^{'}\in L^{2}_{loc}(\R)$. Using $(\ref{178})$ and $(\ref{equacao 31})$, we get
					$$ \int_{E_{j}} (\phi^{'})^{2}\ d\xi \leqslant \sqrt{D}\sqrt{c-m_{j}} \int_{m_{j}}^{c} \dis\frac{1}{\sqrt{c-\phi}}\ d\phi = 2\sqrt{D}(c-m_{j}),\ \ j\geqslant 1$$
and so, by $(\ref{equacao 32})$,
			$$\int_{E_{j}} (\phi^{'})^{2}\ d\xi \leqslant \frac{3}{2}D|E_{j}|,\ \ j\geqslant 1.$$
			
		Thus, we obtain
					$$\int_{\bigcup_{j=1}^{+\infty}E_{j}} (\phi^{'})^{2}\ d\xi \leqslant \frac{3}{2}D \sum_{j=1}^{+\infty} |E_{j}|\  <+\infty.$$
As Lemma \ref{lema2} implies that $\phi^{'}=0$ a.e. on $C=\phi^{-1}(c)$, it follows from the above inequality that $\phi \in H^{1}_{loc}(\R)$.
		
		Finally, it remains to prove that $(\phi - c)^{2}\in W^{2,1}_{loc}(\R)$. To get this, we will show that $|(\phi-c)\phi^{'}|$ is a bounded variation continuous $N$-function.
		
		Since $\phi^{'}\in L^{2}_{loc}(\R)$ and $\phi^{'}= 0$ a.e. on $C$, we have that $(\phi - c)\phi^{'} = 0$ a.e. on $C$. Therefore, from $(\ref{178})$,
						\begin{equation}
						\label{equacao 35}
						|(\phi -c)\phi^{'}| = \sqrt{P_{j}(\phi)}\sqrt{c-\phi},\ \ c\in E_{j},\ \ j\geqslant 1
						\end{equation}
and, then, $|(\phi -c)\phi^{'}|$ is smooth on any interval $E_{j}$ and $|(\phi -c)\phi^{'}| \rightarrow 0$ at finite endpoint of $E_{j}$. So, on each $E_{j}$, $|(\phi -c)\phi^{'}|$ is a symmetric function with respect to the  midpoint of $E_{j}$ with two humps. Thus, by $(\ref{equacao 31})$ and $(\ref{equacao 32})$, the total variation of $|(\phi -c)\phi^{'}|$ in $I$ is bounded by
		$$4\sum_{j=1}^{+\infty} \sup_{\xi\in E_{j}} |(\phi(\xi) -c)\phi^{'}(\xi)| = 4\sum_{j=1}^{+\infty} \sup_{\xi\in E_{j}}\sqrt{P_{j}(\phi)} \sup_{\xi\in E_{j}}\sqrt{c-\phi} \leqslant \frac{8}{3} \sum_{j=1}^{+\infty} |E_{j}|\ < +\infty.$$

			We see from $(\ref{equacao 35})$ that $|(\phi -c)\phi^{'}|$ is continuous. Now writing,
					$$|(\phi -c)\phi^{'}|(N) = |(\phi -c)\phi^{'}|(N\cap C) \cup \left(\bigcup_{j=1}^{+\infty} |(\phi -c)\phi^{'}|(N\cap E_{j})\right)$$
and using Lemma \ref{lema6} with $|(\phi -c)\phi^{'}|$ restricted to $N\cap E_{j}$, we obtain that $|(\phi -c)\phi^{'}|$ is an $N$-function.
			
			Again, we apply Lemma \ref{lema6} to get that $|(\phi -c)\phi^{'}|$ is absolutely continuous. Moreover, since $(\phi -c)\phi^{'}$ is smooth on $E_{j}$ and, for $\xi\in C$, $(\phi(\xi) -c)\phi^{'}(\xi) = |(\phi -c)\phi^{'}|(\xi) = 0$, we have that $(\phi -c)\phi^{'}$ is continuous. Thus, Lemma \ref{lema7} implies that $(\phi -c)\phi^{'}$ is absolutely continuous.
			
			We conclude, since $(\phi - c)^{2}$ is absolutely continuous, that
							$$\left[(\phi -c)^{2}\right]^{'} = 2(\phi - c)\phi^{'}\ \in W^{1,1}_{loc}(\R)$$
and, consequently, $(\phi -c)^{2}\in W^{2,1}_{loc}(\R)$.
\end{proof}	
		
\begin{remark}
In Theorems \ref{classificacao1} and \ref{classificacao2} the solutions are defined on the whole real line. Here we restrict the solution to the interval between two crests in the case of periodic waves, and to the part left or right of the crest in the case of decaying waves. The interval $E$ is accordingly defined to be a finite or half-infinite interval.
\end{remark}

\section{Dependence on parameters}\label{regularity}

			In addition to Theorems \ref{classificacao1} and \ref{classificacao2}, it is possible to characterize the relation between the traveling waves of (\ref{mCH}) and the parameters $m$, $M$ and $c$. Namely, such traveling waves are continuously dependents on these parameters. 

\begin{theorem}
\label{teorema2}
Let $(m_{i}, M_{i}, c_{i})$, $i\geqslant 1$, and $(m,M,c)$ be such that there are corresponding traveling waves of (\ref{mCH}) according to $(a)-(c)$ of Theorem \ref{classificacao1} or $(a)-(g)$ of Theorem \ref{classificacao2}. Let $\phi_{i}$, $i=1,2,...$ and $\phi$ be these traveling waves translated so that they all have crests at $\xi =0$. If $(m_{i}, M_{i}, c_{i})\rightarrow (m,M,c)$, then $\phi_{i}\rightarrow \phi$ in $H^{1}_{loc}(\R)$. In particular, we have uniform convergence on compact sets.
\end{theorem}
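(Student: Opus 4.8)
The plan is to exploit the implicit integral representation of each profile furnished by the first-order relation $(\phi')^{2}=F(\phi)$ of $(TW2)$ and to show that this representation varies continuously with the data $(m,M,c)$. First I would record that every coefficient of $F$ is a continuous function of $(m,M,c)$: the identity $\Re(z)=-(M+m)/2$ together with the quadric (\ref{hiperboloide}) (respectively $z=-m-r-M$ with the ellipsoid (\ref{elipsoide})) and the coefficient matching in (\ref{188}) determine $a$, $d_{i}$ and the remaining zeros $z,\overline{z}$ (or $z,r$) continuously in $(m,M,c)$. Hence $F_{i}\to F$ uniformly on every compact set of heights avoiding the crest and trough, while the crest heights $h_{i}=\min\{M_{i},c_{i}\}$ converge to $h=\min\{M,c\}$.

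Next, with the crest normalized at $\xi=0$ and using that each $\phi_{i}$ is strictly monotone between the crest and the first trough, I would invert $(\phi_{i}')^{2}=F_{i}(\phi_{i})$ into
$$\xi=\int_{\phi_{i}(\xi)}^{h_{i}}\frac{ds}{\sqrt{F_{i}(s)}},\qquad \xi\geqslant 0,$$
with the mirror formula for $\xi\leqslant 0$ in the symmetric cases and the obvious monotone version for kinks. The analytic heart of the matter is that on the relevant height interval the integrand $1/\sqrt{F_{i}}$ has only a square-root singularity $(s-m_{i})^{-1/2}$ at a simple trough zero, or the vanishing factor $(c_{i}-s)^{1/2}$ at a cusp pole, both admitting an integrable majorant uniform in $i$; the double-zero troughs of the decay cases are reached only as $\xi\to\pm\infty$, so on any $[-L,L]$ the profiles stay in a compact band of heights bounded away from them. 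A dominated-convergence argument then gives $\int_{\phi}^{h_{i}}F_{i}^{-1/2}\,ds\to\int_{\phi}^{h}F^{-1/2}\,ds$ uniformly for $\phi$ in compact subsets, and inverting this monotone relation yields $\phi_{i}\to\phi$ uniformly on every compact $\xi$-interval.

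To promote uniform convergence to convergence in $H^{1}_{loc}(\R)$, I would handle the derivatives through the same relation $(\phi_{i}')^{2}=F_{i}(\phi_{i})$. On any compact $\xi$-set at positive distance from a cusp the right-hand sides are uniformly bounded and converge uniformly, and the signs of $\phi_{i}'$ agree by the common monotonicity about the crest, so $\phi_{i}'\to\phi'$ with a uniform $L^{\infty}$ bound and hence in $L^{2}$. Near a cusp the derivatives blow up, but after the change of variables $(\phi_{i}')^{2}\,d\xi=\sqrt{F_{i}}\,d\phi$ the contribution of a thin height-band $[h_{i}-\delta,h_{i}]$ equals $\int_{h_{i}-\delta}^{h_{i}}\sqrt{F_{i}}\,ds$, which is $O(\sqrt{\delta})$ uniformly in $i$ because $\sqrt{F_{i}}\sim(c_{i}-s)^{-1/2}$ is integrable there; this is precisely the mechanism behind the bound $\int_{E_{j}}(\phi')^{2}\,d\xi\leqslant\tfrac{3}{2}D|E_{j}|$ of Lemma \ref{lema importante}. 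The tails being uniformly small, dominated convergence again applies and $\phi_{i}'\to\phi'$ in $L^{2}_{loc}$. Combined with the $L^{2}_{loc}$ convergence of $\phi_{i}$ this gives $\phi_{i}\to\phi$ in $H^{1}_{loc}(\R)$, and the stated uniform convergence on compacta follows from the embedding $H^{1}_{loc}(\R)\hookrightarrow C(\R)$.

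The hard part will be the uniformity of these singular-integral estimates as the parameters move: one must guarantee that the simple zeros and the cusp pole of $F_{i}$ do not collide, so that $F_{i}'$ at the trough and the residue at $s=c_{i}$ stay bounded away from zero uniformly in $i$, and then package the square-root trough singularity and the cusp singularity into a single dominated-convergence statement valid simultaneously for all of $(a)$–$(c)$ of Theorem \ref{classificacao1} and $(a)$–$(g)$ of Theorem \ref{classificacao2}. Away from the thresholds separating the subcases this non-collision is exactly the content of the continuous dependence of the zeros on $(m,M,c)$ established in the first step, so the estimates can be made locally uniform; the delicate book-keeping is to produce one integrable majorant covering all the listed wave types at once.
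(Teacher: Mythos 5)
Your proposal follows essentially the same route as the paper: the paper's own ``proof'' is only a sketch that sets up the same crest-normalized implicit representation $\xi = -\int_{\min\{M,c\}}^{\phi} dy/\sqrt{F(y)}$ together with the period formula, and then defers all the remaining work to the proof of Theorem 2 in \cite{Lenells}, adapted to the quartic $P(\phi)=\phi^{2}(c-\phi^{2}/2)+a\phi+d$. The steps you spell out --- continuity of the zeros and of $a$, $d$ in $(m,M,c)$, dominated convergence for the singular integrals with uniform square-root majorants at troughs and cusps, and the bound $\int(\phi')^{2}\,d\xi=\int\sqrt{F}\,d\phi$ controlling the derivatives near cusps --- are precisely the ingredients of that cited argument, so your proposal is a correct (and more detailed) rendering of the paper's approach.
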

\begin{proof}[Sketch of proof]
			Assume $(m_{i}, M_{i}, c_{i})$, $i\geqslant 0$ and $(m, M, c)$ as in the statement of Theorem \ref{classificacao1} or \ref{classificacao2} and $\phi_{i}$ and $\phi$ the correspond traveling waves with crests at zero.
			
			Note that to the mCH equation we obtain from (\ref{178}) and (\ref{179}) that $\phi$ and $\phi_{i}$ are given implicitly by
			\begin{equation}
			\label{6.2}
			\xi=\left\{\begin{array}{ll}
			\xi_{0} + \int_{\phi_{0}}^{\phi}\frac{dy}{\sqrt{F(y)}},\ \phi^{'} >0&\\
			\xi_{0} - \int_{\phi_{0}}^{\phi}\frac{dy}{\sqrt{F(y)}},\ \phi^{'} <0,&
			\end{array}\right.
			\end{equation}
where $\phi(\xi_{0})=\phi_{0}$. Moreover, by choosing $\xi_{0}=0$, we have $\phi_{0} = \max_{\xi\in\R}\phi(\xi) = \min\{M, c\}$ and
			$$\xi=	- \int_{\min\{M, c\}}^{\phi}\frac{dy}{\sqrt{F(y)}},\ \ 0<\xi<\frac{L}{2},$$
where $L$ is the period of $\phi$ given by
			$$L=2\int_{m}^{\min\{M, c\}}\frac{dy}{\sqrt{F(y)}} = 2\int_{m}^{\min\{M, c\}}\frac{\sqrt{c-y}}{\sqrt{P(y)}} \ dy .$$

			The rest of proof follows a similar line to that of Theorem 2 in \cite{Lenells}, by considering the different distributions of zeros to the polynomial $P(\phi) = \phi^{2}\left(c-\phi^{2}/2\right) +a\phi +d$ instead of $P(\phi)=\phi^{2}(c-\phi) +a\phi +d$ in the equations (6.4), (6.13) and (6.17) of \cite{Lenells}.
\end{proof}

			Also, Theorem \ref{teorema2} is true for waves satisfying $(a^{'})-(c^{'})$ and $(a^{'})-(g^{'})$ of Theorems \ref{classificacao1} and \ref{classificacao2}, respectively.

\section{Explicit formulas for peakons}\label{explicit peakons}
			Suppose that $\phi$ is a wave corresponding to $(d)-(e)$ of Theorem \ref{classificacao2}. In this section we determine explicit formulas for the peaked traveling waves.
			
			From (\ref{6.2}), we have
			\begin{equation}
			\label{7.1}
			|\xi-\xi_{0}| = \displaystyle\int_{\phi_{0}}^{\phi} \frac{dy}{\sqrt{F(y)}} = \sqrt{2}\displaystyle\int_{\phi_{0}}^{\phi} \frac{\sqrt{c-y}}{\sqrt{(M-y)(y-m)(y-z)(y-r)}}\ dy.
			\end{equation}
			
Making $\phi = m+ (M-m)\sin^{2}(\theta)$, we rewrite (\ref{7.1}) by
			\begin{equation}
			\label{7.2}
			|\xi-\xi_{0}| =\frac{2\sqrt{2}}{\sqrt{M-m}} \displaystyle\int_{\theta_{0}}^{\theta} \frac{\sqrt{A -\sin^{2}(t)}}{\sqrt{B+\sin^{2}(t)}\sqrt{C+\sin^{2}(t)}}\ dt
			\end{equation}
where $$A=\frac{c-m}{M-m},\ \ B=\frac{m-r}{M-m}\ \ \text{and}\ \ C=\frac{m-z}{M-m}.$$

The period of $\phi$ is given by
\begin{eqnarray}
\label{eq periodo}
L &=& 2\sqrt{2}\displaystyle\int_{m}^{\min\{M,c\}} \frac{\sqrt{c-y}}{\sqrt{(M-y)(y-m)(y-z)(y-r)}}\ dy\nonumber\\
&=& 4\frac{\sqrt{2}}{\sqrt{M-m}}\displaystyle\int_{\theta_{0}}^{\theta_{\text{max}}} \frac{\sqrt{A -\sin^{2}(t)}}{\sqrt{B+\sin^{2}(t)}\sqrt{C+\sin^{2}(t)}}\ dt .
\end{eqnarray}

Assume $\phi$ a periodic peakon solution as in item $(d)$ of Theorem \ref{classificacao1}. So, $A=1$ and $B,C>0$, with $B\neq C$. From (\ref{7.2}), we have
			\begin{eqnarray*}
			|\xi-\xi_{0}| &=& \frac{2\sqrt{2}}{\sqrt{M-m}}\displaystyle\int_{\theta_{0}}^{\theta} \frac{\cos(t)}{\sqrt{B+\sin^{2}(t)}\sqrt{C+\sin^{2}(t)}}\ dt\\
			&=& \frac{2\sqrt{2}}{\sqrt{M-m}}\left[- \frac{i}{\sqrt{C}}\cdot F\left( \text{arcsin}\left( \frac{i}{B}\sin(t)\right)\ ; \sqrt{\frac{B}{C}}\right) \right]\Bigg{|}_{\theta_{0}}^{\theta},
			\end{eqnarray*}
where $F$ denotes the elliptic integral of the first kind (see Appendix), since $0 < B/C< 1$.

Choosing $\xi_{0}$ such that $\phi(\xi_{0}) = m$, that is, $\xi_{0}$ is a trough, we get $\theta_{0} = 0$. Moreover, since $M=c$ we obtain $\sin(\theta) = \sqrt{\phi -m}/\sqrt{c-m}$. Thus
			$$|\xi-\xi_{0}| = -\frac{i}{D_{1}} \cdot F\left( i\cdot\text{arcsinh}\left( \frac{1}{B} \sqrt{\frac{\phi -m}{c-m}} \right)\ ; k\right),$$
where $D_{1} = \sqrt{C}\sqrt{c-m}/2\sqrt{2}$ and $k=\sqrt{B/C}$ is the module of the elliptical integral (see Appendix).

So, by properties of the elliptic integral of the first kind \cite{BF}, we obtain
				$$\sin\left(i\cdot\text{arcsinh}\left( \frac{1}{B} \sqrt{\frac{\phi -m}{c-m}} \right)\right) = \text{sn}(D_{1}|\xi-\xi_{0}|i\ ;k),$$
where $sn$ is a Jacobian elliptic function (see Appendix).

Therefore, solving this equation we conclude that
					$$\phi(\xi) = m + D_{2}\text{tn}^{2}(D_{1}|\xi-\xi_{0}|\ ;k'),\ \ |\xi -\xi_{0}| \leqslant \frac{L}{2},$$
where
					$$L= D_{3}\cdot F\left(\text{arcsin}\left(\sqrt{\frac{1/B^{2}}{1+ 1/B^{2}}}\right)\ ; k'\right)$$
is the period of $\phi$ obtained from (\ref{eq periodo}), $D_{2} = B^{2}(c-m)$, $D_{3}= 2/D_{1}$, $k'^{2}= 1 -k^{2}$ and $tn$ is a Jacobian elliptic function (see Appendix). This is the explicit formula to the periodic peakons of the mCH equation.  
				
If $\phi$ is a peakon with decay as in $(e)$ of Theorem \ref{classificacao2}, then $A=1$, $B=0$ and $C>0$. So, (\ref{7.2}) gives us
				$$|\xi-\xi_{0}| =\frac{2\sqrt{2}}{\sqrt{M-m}} \displaystyle\int_{\theta_{0}}^{\theta} \frac{\cos(t)}{\sin^{2}(t)\sqrt{C+\sin^{2}(t)}}\ dt = \frac{2\sqrt{2}}{\sqrt{C}\sqrt{M-m}}\Big{[}-\text{arccsch}|t|\Big{]}\Big{|}_{\frac{\sin(\theta_{0})}{\sqrt{C}}}^{\frac{\sin(\theta)}{\sqrt{C}}}.$$

Choosing $\xi_{0}$ such that $\phi(\xi_{0}) = c$, that is, $\xi_{0}$ at the peak, we get $\sin(\theta_{0}) = \sin(\theta_{\text{max}}) = 1$. Moreover, since $M=c$ we obtain $\sin(\theta) = \sqrt{\phi -m}/\sqrt{c-m}$. Thus
				$$|\xi-\xi_{0}| = -\frac{2\sqrt{2}}{\sqrt{C}\sqrt{c-m}} \left[ \text{arccsch}\left|\frac{\sqrt{\phi -m}}{\sqrt{C}\sqrt{c-m}}\right| - \text{arccsch} \left(\frac{1}{\sqrt{C}}\right) \right].$$
				
Now, we use the identity
			$$\text{arccsch}(x) = \ln\left(\frac{1}{x} + \sqrt{\frac{1}{x^{2}} +1}\right) = \ln\left(\frac{1 +\sqrt{1 + x^{2}}}{x}\right);\ \ x\in\R,\ x\neq 0$$
in the above equation to obtain
						$$ -\frac{\sqrt{C}\sqrt{M-m}|\xi-\xi_{0}|}{2\sqrt{2}} = \ln\left(\frac{\frac{\sqrt{C}\sqrt{c-m}}{\sqrt{\phi -m}}\left( 1+ \sqrt{1+ \frac{\phi -m}{C(c-m)}}\right) }{\sqrt{C}\left(1+ \sqrt{1+\frac{1}{C}} \right)}\right).$$

Thus,
				$$\phi(\xi) = m +\frac{D_{4}e^{-D_{5}|\xi-\xi_{0}|}}{(D_{6}e^{-D_{5}|\xi-\xi_{0}|} -1)^{2}}$$
is an explicit expression for the peakons with decay, where $D_{4} = 4C^{2}(c-m)(1+\sqrt{1+1/C})^{2}$, $D_{5} = \sqrt{C}\sqrt{c-m}/\sqrt{2}$ and $D_{6}= C(1+\sqrt{1+1/C})^{2}$.

\section{Acknowledgment}
The first author was supported by CAPES/Brazil.

\section{Appendix}
In this appendix we will talk about some concepts used so far without further explanation. In accordance with \cite{BF}, we start setting the \textit{normal elliptic integral of the first kind}
			$$F(\phi , k) = F(\phi\ |\ k^{2} ) = F_{k}(\phi) = \int_{0}^{y} \ \frac{dt}{\sqrt{(1-t^2)(1-k^2 t^2)}} = \int_{0}^{\phi} \frac{d\varphi}{\sqrt{(1 - k^{2}\sin^{2}\varphi)}},$$
where $y=\sin\phi$.

			The parameter $k$ is called the \textit{modulus of the elliptic integral} and $k'^{2} = 1-k^{2}$ its \textit{complementary modulus}, and both may take any real or imaginary value. Here we wish to take $0 < k^{2} < 1$. Moreover, the variable $\phi$ is called \textit{argument} and it is usually taken belonging to $\left[\frac{-\pi}{2} , \frac{\pi}{2}\right]$.

			In his algebraic form, the elliptic integral above is finite for all real (or complex) values of $y$, including infinity. When $\phi= \frac{\pi}{2}$, the integral $K(k) \dot{=} F \left(\frac{\pi}{2}, k \right)$ is said to be \textit{complete}.
			
			We define the \textit{Jacobian elliptic functions} using the inverse function of the elliptic integral of the first kind. This inverse function exists because that 
$$ u(y_{1}, k) \equiv u = \int_{0}^{y_{1}} \frac{dt}{\sqrt{(1-t^{2})(1-k^{2}t^{2})}} = \int_{0}^{\phi} \frac{d\varphi}{\sqrt{1 - k^2 \sin^2 \varphi}} = F(\phi, k),$$
is a strictly increasing function of the real variable $y_{1}$ and, in its algebraic form, this integral has the property of being finite for all values of $y_{1}$. This inverse $\phi = \mathrm{am}(u,k) = \mathrm{am}u$ is called \textit{amplitude function}.

		There are several Jacobian elliptic functions that can be seen in \cite{BF}, but here we will only define the functions $\textit{sn}$, $\textit{cn}$ and \textit{tn} as follows
			$$
			\begin{array}{lll}
			\text{sn}(u,k) = \text{sin}\ \mathrm{am}(u,k) = \text{sin} \phi,&&\\
			\text{cn}(u,k) = \text{cos}\ \mathrm{am}(u,k) = \text{cos} \phi,&&\\
			\text{tn}(u,k) = \dis\frac{\text{sin}\ \mathrm{am}(u,k)}{\text{cos}\ \mathrm{am}(u,k)} = \frac{\text{sin} \phi}{\text{cos} \phi}.&&
			\end{array}
			$$
These functions have a real period, namely $4 K(k)$, and some important properties summarized by the formulas given below.
$$
\begin{array}{lll}
\text{sn}(-u) = -\text{sn}(u), &&\ \ \text{sn}^2u + \text{cn}^2u = 1,\\
 \text{cn}(-u)=\text{cn}(u), &&\ \ -1 \leqslant \text{sn} u , \text{cn} u \leqslant 1,\\
 \text{tn}(-u) = -\text{tn}(u), &&\ \  \text{sn}(iu,k) = i\text{tn}(u,k').
\end{array}
$$
\\

{
\fontsize{10pt}{\baselineskip}\selectfont

\textbf{References}

}













\end{document}